\newtheorem{theorem}{Theorem}[section]
\newtheorem{corollary}[theorem]{Corollary}
\newtheorem{lemma}[theorem]{Lemma}
\newtheorem{prop}[theorem]{Proposition}
\theoremstyle{definition}
\newtheorem{remark}[theorem]{Remark}
\def\S{\mathbb{S}^{N-1}}
\def\N{\mathbb{N}}
\def\Z{\mathbb{Z}}
\def\R{\mathbb{R}}
\let\e=\varepsilon
\let\t=\tilde
\let\ol=\overline
\let\ul=\underline
\let\mc=\mathcal
\def\1{\mathbbm{1}}
\def\cv{\underset{t\to+\infty}{\longrightarrow}}
\newenvironment{formula}[1]{\begin{equation}\label{#1}}
                       {\end{equation}\noindent}
\def\Fi#1{\begin{formula}{#1}}
\def\Ff{\end{formula}\noindent}
\title{\bf Long-time behavior of the heterogeneous SIRS epidemiological model}
\author[1]{Romain {\sc Ducasse}
}
\author[1]{Maxime {\sc Laborde}}
\affil[1]{Université Paris Cité, CNRS, Sorbonne Université, Laboratoire Jacques-Louis Lions (LJLL), F-
75006 Paris, France}
\date{}
\begin{document}

\maketitle

\noindent {\textbf{Keywords:} reaction-diffusion systems, SIR models, spreading speed, epidemiology, threshold phenomenon.} \\

\noindent {\textbf{MSC:} 35B40, 35K10, 35K40, 35K57, 92C60.}
\begin{abstract}
   We study the long-time behavior of solutions of the SIRS model, a reaction-diffusion system that appears in epidemiology to describe the spread of epidemics. We allow the system to be heterogeneous periodic. Under some hypotheses on the coefficients, we prove that the solutions converge to an equilibrium that we identify and establish some estimates on the speed of propagation.
\end{abstract}


\section{Introduction}

\subsection{Presentation of the problem}

This paper is dedicated to the study of the following heterogeneous parabolic system
\begin{equation}\label{PDE1}
\left\{
\begin{array}{rll}
\partial_t S(t,x) &= d\Delta S(t,x) - \alpha(x) S(t,x) I(t,x) + \lambda(x) R(t,x),\quad &t>0,\ x\in \R^N,\\
\partial_t I(t,x) &= d \Delta I(t,x) + \alpha(x) S(t,x) I(t,x) - \mu(x) I(t,x), \quad &t>0,\ x\in \R^N,\\
\partial_t R(t,x) &= d \Delta R(t,x) + \mu(x) I(t,x) -\lambda(x) R(t,x), \quad &t>0,\ x\in \R^N,
\end{array}
\right.
\end{equation}
where $d>0$ and where the parameters $\alpha,\lambda,\mu$ are periodic functions of the space variable $x$.\\

This system appears in mathematical epidemiology, where it is known as the SIRS system. In this setting, the quantities $S(t,x),I(t,x),R(t,x)$ represent the number of individuals in a given population who are respectively {\em Susceptible} (they do not have the disease but can be infected), {\em Infectious/Infected} (they have the disease and can transmit it) and {\em Recovered} (they had the disease but are now healed) at time $t$ and at position $x$. 

When a susceptible encounters an infectious individual, the susceptible individual can be contaminated with some rate $\alpha>0$ and turned into an infectious individual, hence the {\em mass-action} term $-\alpha SI$ in the equation for $S$ and $+\alpha S I$ in the equation for $I$. The infectious individuals cease to be infectious with some rate $\mu>0$ (then, $\frac{1}{\mu}$ can be seen as the average duration of the infection). Individuals who cease to be infectious become recovered. The recovered individual can not be directly contaminated, they are immune for some time. They lose their immunity with some rate $\lambda>0$, and become susceptible again (they can be infected again).\\

The SIRS system is a {\em compartmental system}. Such systems were originally introduced by Kermack and McKendrick in the papers \cite{KmcK1, KmcK2, KmcK3}. These models are now a cornerstone of mathematical epidemiology. For more details on the epidemiological framework, we refer to the books \cite{Murray1, Murray2, Perthame}. However, these original models were ODE systems, they did not take space into account. In many applications, the effects of space and spatial heterogeneities can not be neglected.\\

In this paper we investigate the SIRS model with a spatial structure, possibly heterogeneous periodic. This allows to consider situations where the key features of the model (the rates of recovery and of contamination, the initial density of population) vary from places to places. These variations can result from different public policies in different places, or they can reflect the influence of the geography, etc. We refer to \cite{Murray2} for more details on the importance of heterogeneities in models in mathematical biology.\\

The model \eqref{PDE1} is a reaction-diffusion system. Such systems appear in the modelling of a variety of phenomenon in chemistry, physics, biology, etc.
Unlike scalar reaction-diffusion equations, for which many general theories have been developed (see Section \ref{related results}) to describe the long-time behavior of solutions, most questions are still open for systems. This is because systems usually lack of comparison principles, upon which most of the theory for reaction-diffusion equations is built.\\

The main result of this paper concerns the long-time behavior of \eqref{PDE1}. We prove that, {\em provided the parameter $\lambda$ is large enough} (we shall give estimates on the required bounds), then the behavior of the system \eqref{PDE1} is characterized by the value of the principal eigenvalue of some operator. If this eigenvalue is strictly negative, the epidemic spreads: there exists a strictly positive stationary periodic state $(S^\star,I^\star,R^\star)$, and the solutions of \eqref{PDE1} converge toward this stationary state. We also give estimates on the {\em spreading speed} of the solutions (we define this notion in Section \ref{related results}). On the other hand, if the mentioned eigenvalue is non-negative, then the solutions converge to a disease-free equilibrium: the epidemic does not spread. In some sense, our result generalizes the classical {\em Freidlin-Gartner} result (see Theorem \ref{th FG} in Section \ref{related results}) to the case of our reaction-diffusion system, under the hypothesis that $\lambda$ is large enough. \\

Let us mention that, even in the case where the coefficients are constants (that is, when $\alpha,\mu,\lambda \in \R$), as far as we are aware, the question of convergence of the solutions was not known.

\subsection{Related results}\label{related results}

We gather here some results on reaction-diffusion equations and systems, in order to explain how our results fit in this context.

\subsubsection{Reaction-diffusion equations}

Reaction-diffusion equations are semilinear parabolic equations of the form
\begin{equation}\label{eq rd}
    \partial_t u = d\Delta u +f(u),\quad t>0,\ x\in \R^N.
\end{equation}
When the function $f$ is the concave function $f(u)=u(1-u)$, equation \eqref{eq rd} is known as the Fisher-Kolmogorov-Petrovski-Piskunov equation. These authors studied this equation in their seminal papers \cite{KPP, Fisher} and proved that, for any initial datum $u_0$ which is continuous, non-negative, non-zero and compactly supported (a \emph{small initial disturbance} of the state $u\equiv 0$), propagation occurs in the sense that
$$
\sup_{\vert x \vert < ct}\vert u(t,x) - 1\vert \underset{t\to+\infty}{\longrightarrow} 0, \quad \forall c\in [0,2\sqrt{d}),
$$
and
$$
\sup_{\vert x \vert > ct}\vert u(t,x) \vert \underset{t\to+\infty}{\longrightarrow} 0, \quad \forall c> 2\sqrt{d}.
$$
This is a {\em spreading} result. This means that the solution $u$ converges to the stationary solution $1$ and does so with speed $c^\star = 2\sqrt{d}$. Indeed, the level sets of the solution $\{x \in \R^N \ : \  u(t,x) = z\}$ ($z \in (0,1)$) grow like balls with radius $ct$, that is, they expand in all directions with speed $c$ (the propagation is isotropic).

When the nonlinearity $f$ is a different function, the situation is more involved, see \cite{AW} for instance.\\

The case of heterogeneous equations was also considered, and many results were obtained, we refer to \cite{BHN, BHNadirI, BHNadirII} and the references therein. A very salient result is the {\em Freidlin-Gartner formula} \cite{FG}, which generalises the spreading result mentioned above. We state it here and we will use it several times in the sequel. 
\begin{theorem}[Freidlin-Gartner]\label{th FG}
    Let $\gamma, \alpha \in C^\delta_{per}$\footnote{Here and in the sequel, for $\delta \in (0,1)$, $C^\delta_{per}$ denotes the set of functions which are $\delta$-Hölder continuous and $1$-periodic with respect to the $x$ variable, that is, $f(x+k)=f(x)$ for all $k\in \Z^N$. We only consider $1$-periodicity for simplicity, doing otherwise would not change the analysis.}, for some $\delta>0$ and assume that $\alpha>0$. Let $u(t,x)$ be the solution of 
    \begin{equation}\label{eq KPP per}
    \partial_t u = d\Delta u + \gamma u - \alpha u^2, \quad t>0, \ x\in \R^N,
    \end{equation}
    with initial datum $u(0,\cdot)$ continuous, compactly supported, non-negative and non-zero and let $\lambda_1$ be the principal periodic eigenvalue of the operator $-d\Delta - \gamma$. 
    
    Assume that, 
    $$\lambda_1<0,$$
    then:
    
    \begin{itemize}
        \item There is a unique positive stationary solution to \eqref{eq KPP per}, name it $p(x)$. This function is $1$-periodic with respect to $x$.

    \item We have 
    $$u(t,x)\underset{t\to+\infty}{\longrightarrow} p(x)$$ as $t$ goes to $+\infty$, locally uniformly in $x$.

   \item Moreover, there is $w^\star$, a positive continuous function from the sphere $\S$ to $\R^+_\star$ such that the propagation occurs with spreading speed $w^\star(e)$ in the direction $e\in \S$, in the sense that
    $$
    \sup_{\substack{x = r e \\ r \in [0,(w^\star(e)-\e)t], \, e \in \S}} \vert u(t,x) - p(x)\vert \underset{t\to+\infty}{\longrightarrow} 0,\quad \forall \e>0,
    $$
    and
    $$
    \sup_{\substack{x = r e \\ r \geq (w^\star(e)+\e)t, \, e \in \S}} \vert u(t,x) \vert \underset{t\to+\infty}{\longrightarrow} 0,\quad \forall \e>0.
    $$

    \item In addition, the function $w^\star$ above is given by the Freidlin-Gartner formula: 
\begin{equation}\label{eq FG}
    w^\star(e) := \inf_{\rho \in \R^N} \frac{-k(\rho)}{\rho\cdot e},
\end{equation}
where $k(\rho)$ is the principal eigenvalue of the following operator acting on $C^\delta_{per}$,
\begin{equation}
\label{eq operator L_lambda} 
    L_\rho(\psi) = - d\Delta \psi + 2d \rho \cdot \nabla \psi -(d \vert \rho \vert^2 + \gamma(x))\psi.
\end{equation}

\end{itemize}

On the other hand, if $\lambda_1 \geq 0$, then there are no positive stationary solutions to \eqref{eq KPP per}, and the solutions of \eqref{eq KPP per} converge to $0$ uniformly in $x$ as $t$ goes to $+\infty$.
    
\end{theorem}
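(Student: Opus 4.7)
The plan is to follow the PDE-theoretic route to the Freidlin--Gartner theorem, essentially as developed by Berestycki, Hamel and Nadirashvili, splitting the work into four pieces: (i) existence, uniqueness and attractiveness of the positive periodic steady state $p$ when $\l<0$; (ii) an upper bound on the spreading speed built from exponential barriers; (iii) a matching lower bound via compactly supported subsolutions or pulsating fronts; and (iv) the extinction dichotomy when $\l\geq 0$.

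For (i), let $\vp$ denote the positive periodic principal eigenfunction of $-d\Delta-\gamma$. For $\e>0$ small, $\e\vp$ is a strict subsolution of the stationary problem $-d\Delta v = \gamma v - \alpha v^2$, while any constant $M \geq \max(\gamma/\alpha)$ is a supersolution; monotone iteration produces a positive periodic stationary solution $p$ trapped between them, and uniqueness follows from the sliding method, or equivalently from the fact that the ratio of any two positive periodic solutions must be constant by the strong maximum principle applied to the linearized equation. To obtain local uniform convergence $u(t,\cdot)\to p$, I would squeeze $u$ between two flows: the parabolic flow from the constant $M$ is non-increasing and converges to $p$ from above, while after an arbitrarily short time the strong maximum principle guarantees $u(1,\cdot)>0$ everywhere, so one can place beneath $u(1,\cdot)$ a small multiple of the Dirichlet principal eigenfunction on a ball $B_R$ with $R$ large enough that the Dirichlet eigenvalue is still negative, and the parabolic flow from this subsolution increases to $p$.

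For (ii), I use the family $\{L_\rho\}$ of \eqref{eq operator L_lambda}. Fix $e\in\S$ and $\rho\in\R^N$ with $\rho\cdot e>0$, and let $\phi_\rho$ be the positive periodic principal eigenfunction of $L_\rho$ with eigenvalue $k(\rho)$. A direct computation shows that $\overline u(t,x) := A\, e^{-\rho\cdot x - k(\rho)t}\phi_\rho(x)$ is an exact solution of the linear equation $\partial_t u = d\Delta u + \gamma u$, hence a supersolution of \eqref{eq KPP per}. Choosing $A$ large enough that $\overline u(0,\cdot)\geq u(0,\cdot)$---which is possible because $u(0,\cdot)$ is compactly supported---the comparison principle forces $u$ to decay exponentially wherever $\rho\cdot x > -k(\rho)t$, which excludes spreading in the direction $e$ at speeds above $-k(\rho)/(\rho\cdot e)$; the infimum over $\rho$ is exactly $w^\star(e)$ of \eqref{eq FG}. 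For the matching lower bound, for each $c<w^\star(e)$ I would build a compactly supported subsolution that travels at speed $c$ by truncating a shifted exponential of the form $e^{-\rho\cdot(x-cte)}\phi_\rho(x)$ near its leading edge and exploiting the KPP property $\gamma u - \alpha u^2\leq \gamma u$; alternatively one invokes the Berestycki--Hamel existence theory for pulsating traveling fronts of every speed $c\geq w^\star(e)$ and compares $u$ with a suitably shifted front.

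For (iv), when $\l>0$, the function $Me^{-\l t}\vp(x)$ is an exact solution of the linear equation, hence a supersolution of \eqref{eq KPP per} for $M$ chosen so that $M\vp \geq u(0,\cdot)$, and its decay forces $u \to 0$ uniformly. When $\l=0$, a short adjoint computation rules out positive periodic stationary solutions even in this borderline case, and a classification of bounded entire solutions via parabolic compactness along sequences $t_n\to\infty$ yields the same conclusion. The main obstacle in the whole program is the lower bound on the spreading speed: matching the Freidlin--Gartner infimum from below requires sharp spectral information on the map $\rho\mapsto k(\rho)$---notably the existence of an optimizing $\rho$ and the behavior of $k$ on the boundary of its admissible set---together with either a delicate compactly supported construction or the full pulsating fronts theory, and this is where essentially all of the technical effort sits.
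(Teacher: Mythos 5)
The paper does not prove Theorem~\ref{th FG}: it presents it as a classical result, attributes the original probabilistic proof to Freidlin and Gartner \cite{FG} and a later PDE proof to Berestycki, Hamel and Nadin \cite{BHN}, and then uses it as a black box. There is therefore no ``paper's own proof'' to compare against; the relevant comparison is with the approach the paper invokes, namely \cite{BHN}.

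Your sketch is a faithful outline of that route. The sub/supersolution construction, monotone iteration, and sliding argument for part (i) are standard; your computation verifying that $A e^{-\rho\cdot x - k(\rho)t}\phi_\rho(x)$ is an exact solution of the linearized equation $\partial_t u = d\Delta u + \gamma u$ is correct, and by the KPP structure it dominates the nonlinear solution, giving the outer estimate; and the compactly supported moving subsolution for the inner estimate is indeed where the technical weight sits --- the paper itself relies on exactly this construction (Section~4.1 of \cite{BHN}) in Step~3 of the proof of Lemma~\ref{lem KPP}, where it cites the existence of compactly supported subsolutions propagating at any speed below the Freidlin--Gartner envelope. Your treatment of the $\lambda_1\ge 0$ case, via the exponentially decaying supersolution $Me^{-\lambda_1 t}\varphi$ for $\lambda_1>0$ and an adjoint integration to rule out nontrivial stationary states at $\lambda_1=0$, is also sound.

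One imprecision worth flagging in your alternative route through pulsating fronts: in a fixed direction $e$, pulsating fronts exist for speeds $c\ge c^\star(e):=\min_{s>0}\tfrac{-k(se)}{s}$, and in general $c^\star(e)>w^\star(e)$, since $w^\star(e)$ optimizes $\tfrac{-k(\rho)}{\rho\cdot e}$ over all $\rho$ with $\rho\cdot e>0$, not merely over $\rho$ parallel to $e$. Recovering the spreading speed $w^\star(e)$ from the fronts picture requires sweeping over all front directions $e'$ with $e'\cdot e>0$ and taking the envelope of the resulting speeds projected onto $e$; comparing with a single front in direction $e$ only yields the weaker bound $c^\star(e)$.
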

%
%
Theorem \ref{th FG} was originally obtained by Freidlin and Gartner \cite{FG} by mean of probabilistic tools. It was later proved by Berestycki, Hamel and Nadin in \cite{BHN} using a PDE approach. Similar properties of spreading for heterogeneous reaction-diffusion equations have been studied with other approaches: the viscosity solution/singular perturbation method is adopted by Evans and Souganidis in \cite{ES} and Barles, Soner and Souganidis in \cite{BSS}. Weinberger uses an abstract discrete system approach in \cite{W}. The result was extended to more general nonlinearities by Rossi \cite{Rossi} and to more general domains by the first author in \cite{Duc1}.\\

In some sense, our main result (Theorem \ref{main th} below) partially extends the Freidlin-Gartner theorem to the system \eqref{PDE1}, under the hypothesis that $\lambda$ is large enough. We emphasize that we expect the Freidlin-Gartner result to be false for systems of reaction-diffusion in general.\\

\paragraph{Periodic principal eigenvalues.}
Let us give a quick reminder on periodic principal eigenvalues, as they play an important role in the sequel. For $\gamma \in C^\delta_{per}$, the principal periodic eigenvalue of the elliptic operator
$$
\phi \mapsto -d\Delta \phi - \gamma \phi
$$
designs its smallest eigenvalue. The existence of this eigenvalue comes from the Krein-Rutman theorem applied to elliptic equations, see \cite{GT}: it states that such elliptic operators admit a unique eigenvalue associated with a positive eigenfunction, that this eigenvalue is simple and that it is the smallest eigenvalue.

For a given operator $L$, we denote $\lambda_1(L)$ its principal eigenvalue. When the operator is a symmetric operator of the form $-\Delta - \gamma$, the principal periodic eigenvalue is given by the classical Rayleigh-Ritz quotient ($ C^1_{per}$ is the set of $C^1$ functions that are periodic)
$$
\lambda_1(-d\Delta -\gamma) = \inf_{\phi \in C^1_{per}} \frac{\int_{[0,1]^N} d\vert \nabla \phi \vert^2 - \gamma \phi^2}{\int_{[0,1]^N} \phi^2}.
$$
We mention that there exist other variational formulae that are valid also when the operator is not symmetric (for instance if there is a drift term). See \cite{BR} for more results on the use of principal eigenvalues in reaction-diffusion theory.\\

Observe that, in the homogeneous setting, that is when $\gamma(x) \equiv \ol \gamma \in \R$ and $\alpha(x) \equiv \ol \alpha \in \R$, then the principal periodic eigenvalue $\lambda_1$ of $-d\Delta -\ol \gamma$ is simply $-\ol \gamma$, therefore the condition to have propagation in \eqref{eq KPP per} is to have $\ol \gamma >0$. Also, the principal periodic eigenvalue of $L_\lambda$ is $-(d\vert \lambda \vert^2 + \ol \gamma)$, and it is easy to check that in this case
$$
w^\star(e) = \inf_{r>0} \frac{d r^2 + \ol \gamma}{r} = 2\sqrt{d\ol \gamma},
$$
that is, the speed is isotropic, we find the result of Kolmogorov, Petrovski and Piskunov mentioned above (with $\ol\gamma=1)$.

\subsubsection{Reaction-diffusion systems and epidemiology}

When it comes to systems of reaction-diffusion equations, the situation is much more involved. Indeed, a main tool in proving Theorem \ref{th FG} is the parabolic comparison principle. Such principle does not hold for systems in general. However, using a variety of techniques, many results were obtained for compartmental models. \\

The compartmental model which was the most studied is probably the SIR model, which is system \eqref{PDE1} with $\lambda \equiv 0$ (there is no waning of immunity). It reads as
\begin{equation}\label{SIR}
\left\{
\begin{array}{rll}
\partial_t S &= d_1\Delta S - \alpha S I ,\quad &t>0,\ x\in \R^N,\\
\partial_t I &= d_2 \Delta I + \alpha S I - \mu I, \quad &t>0,\ x\in \R^N.
\end{array}
\right.
\end{equation}
One can also write an equation for the $R$ population, but because these individuals are immune, they do not really play any role in the system.\\

Although simple looking, the model is not easy to study. For instance, proving that the solutions are bounded uniformly in $(t,x)\in \R^+\times \R^N$ - which would be the first step to have any hope to study the long-time behavior of solutions - is not so easy when $d_1\neq d_2$ (if $d_1=d_2$ one can sum the equations and apply the parabolic comparison principle to the resulting equation). We refer to \cite{Herrero} for details on this topic.\\

However, the existence of traveling waves for \eqref{SIR} was proven by Hosono and Ilyas \cite{HI} (the study boils down to a system of ODE which can be studied using adequate methods). Later, Källen \cite{Kallen} proved in some cases propagation results similar to the one considered here. These results were concerned with the homogeneous case. 

Some results were established in the heterogeneous setting, we refer to \cite{Ducrot, DG}.\\ 

Let us also mention the work \cite{BRRSIR} where the authors consider the influence of networks on similar  models. We also mention that models where contamination occurs at distance were first suggested by Kendall \cite{K1, K2}, and were studied in the homogeneous case by Diekmann and Thieme, see \cite{Diek}, \cite{T} and in the heterogeneous case by the first author, see \cite{DucHetSir}.\\

Our system, the SIRS model \eqref{PDE1} was less studied. For instance, we are not aware of the proof of existence of traveling waves for \eqref{PDE1} (which is usually the first topic studied, as it boils down to the study of ODEs). The difference with the SIR system \eqref{SIR} is that the individuals can be contaminated several times, leading to some oscillations in the dynamics, see Figure \ref{fig}.\\

\begin{figure}[h!]

\begin{tabular}{@{\hspace{0mm}}c@{\hspace{1mm}}c@{\hspace{1mm}}c@{\hspace{1mm}}}

\centering
\includegraphics[ scale=0.30]{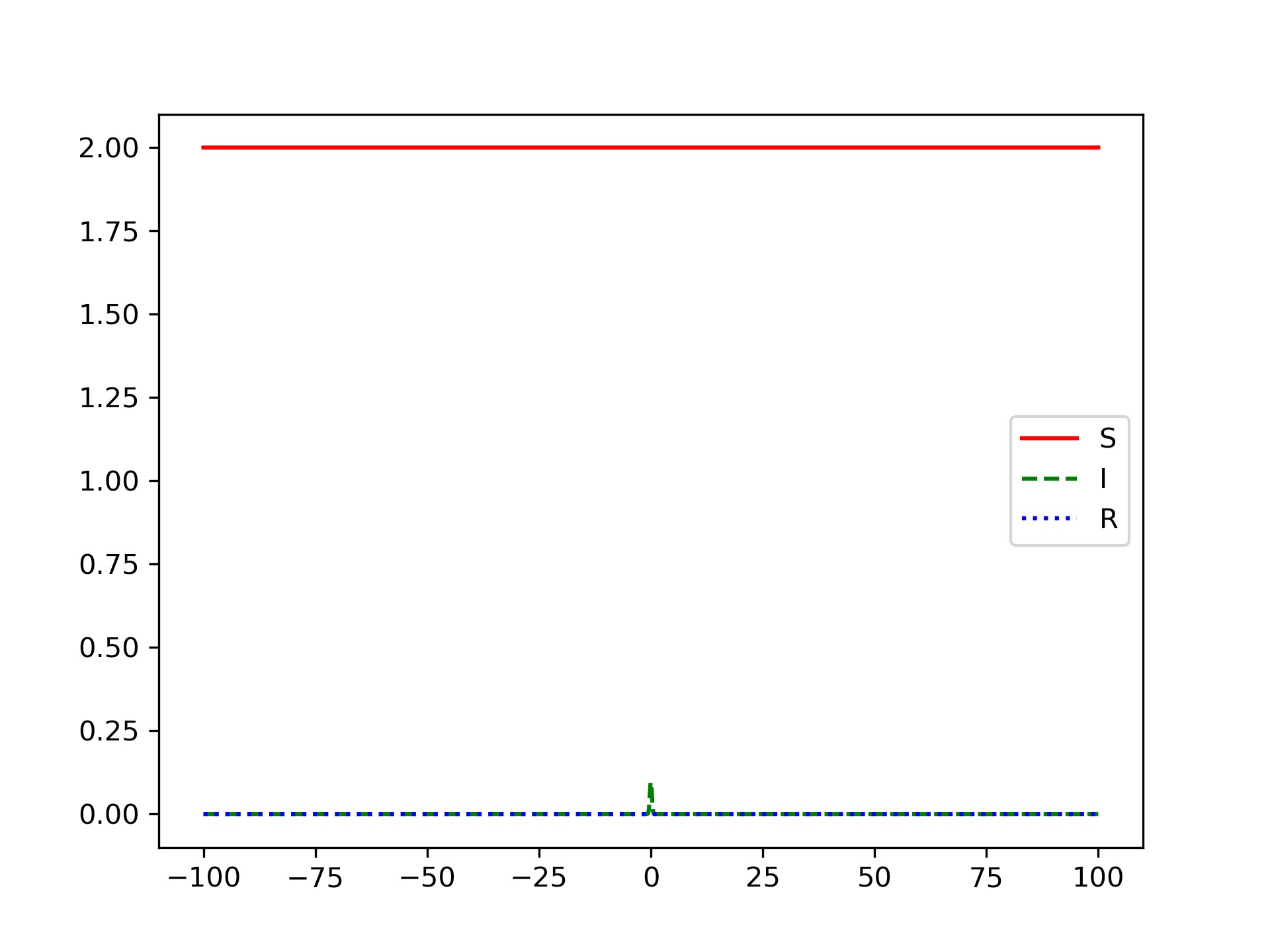}&
\includegraphics[ scale=0.30]{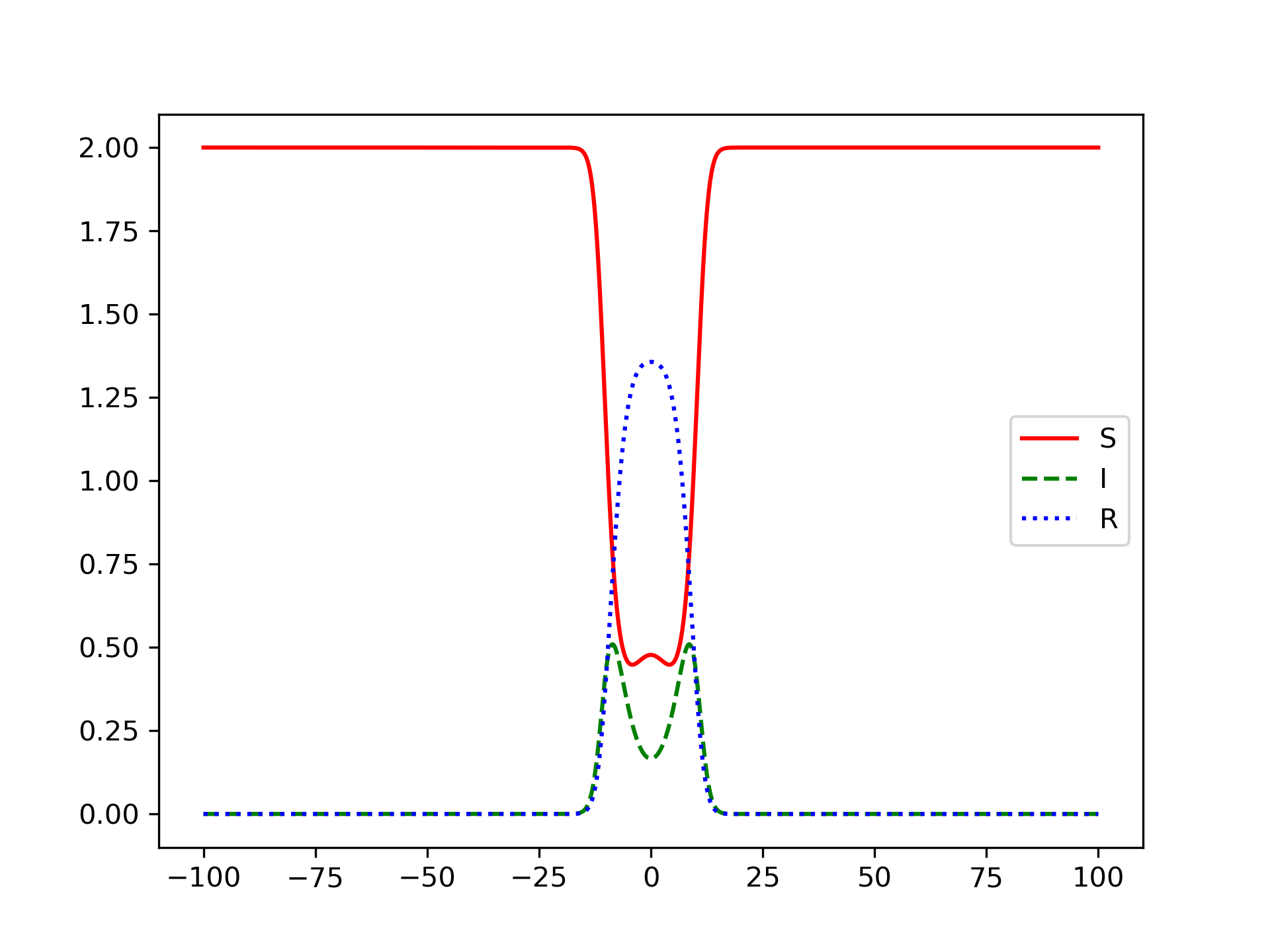}&
\includegraphics[ scale=0.30]{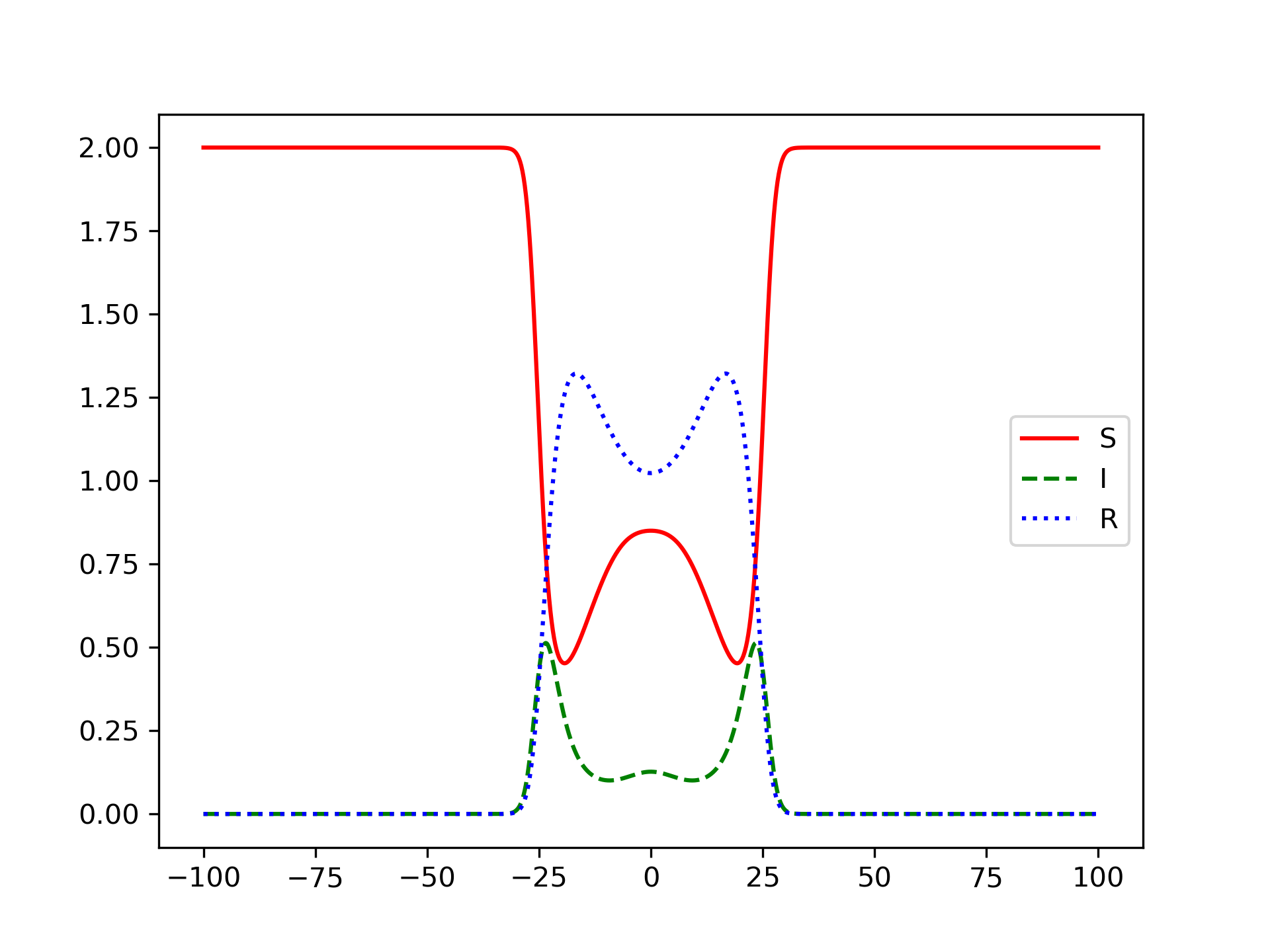}\\
\includegraphics[ scale=0.30]{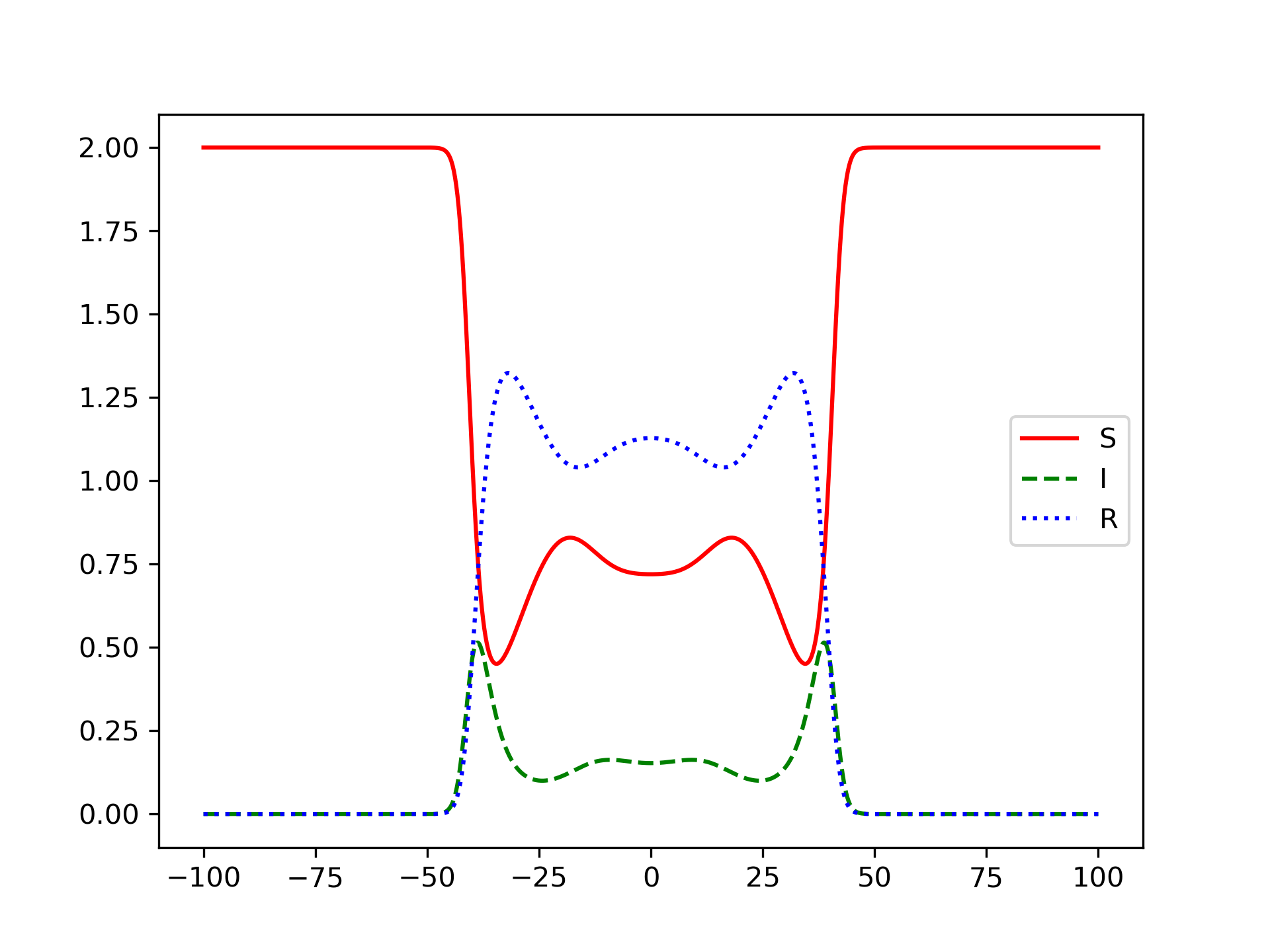}&
\includegraphics[ scale=0.30]{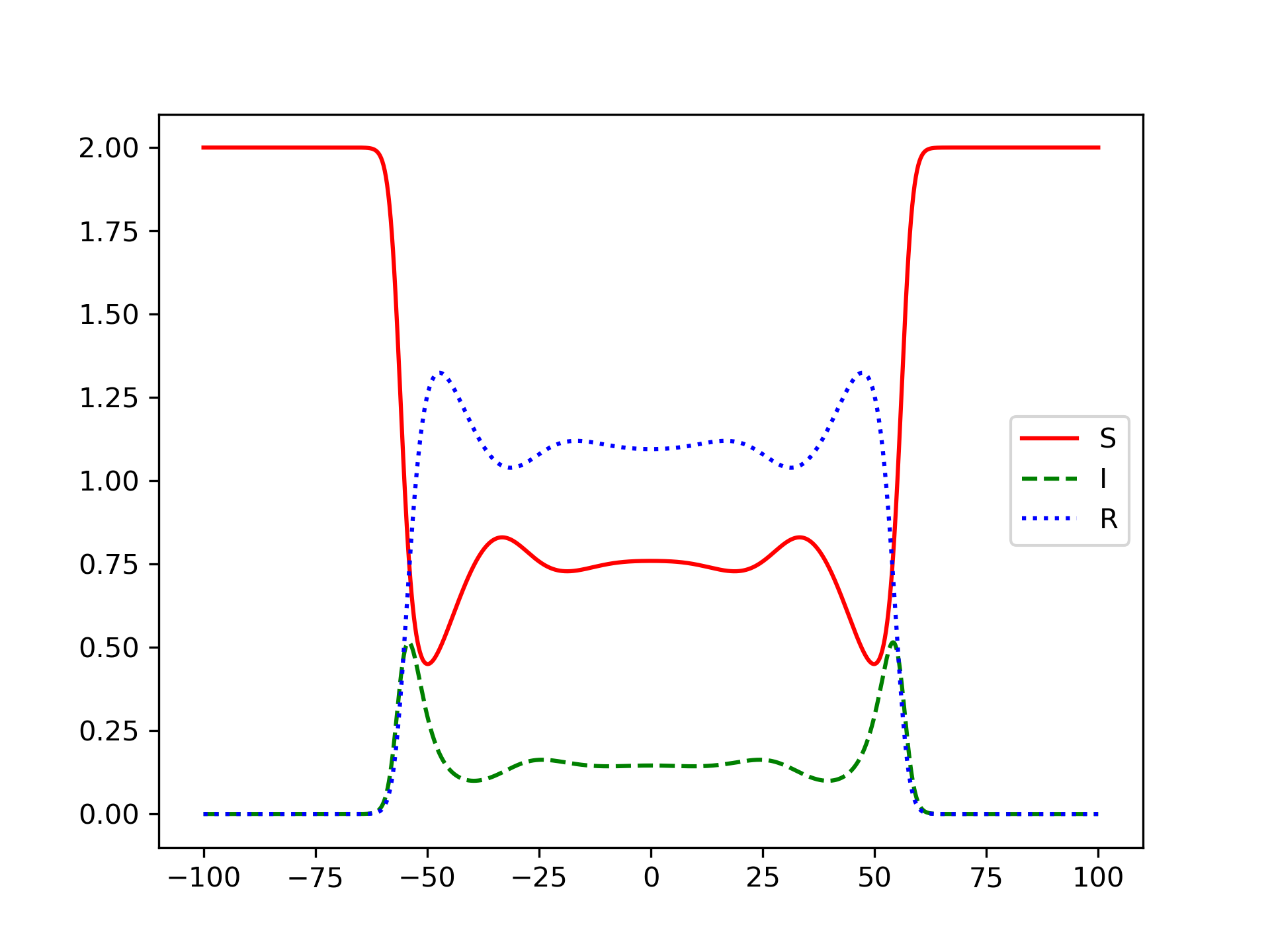}&
\includegraphics[ scale=0.30]{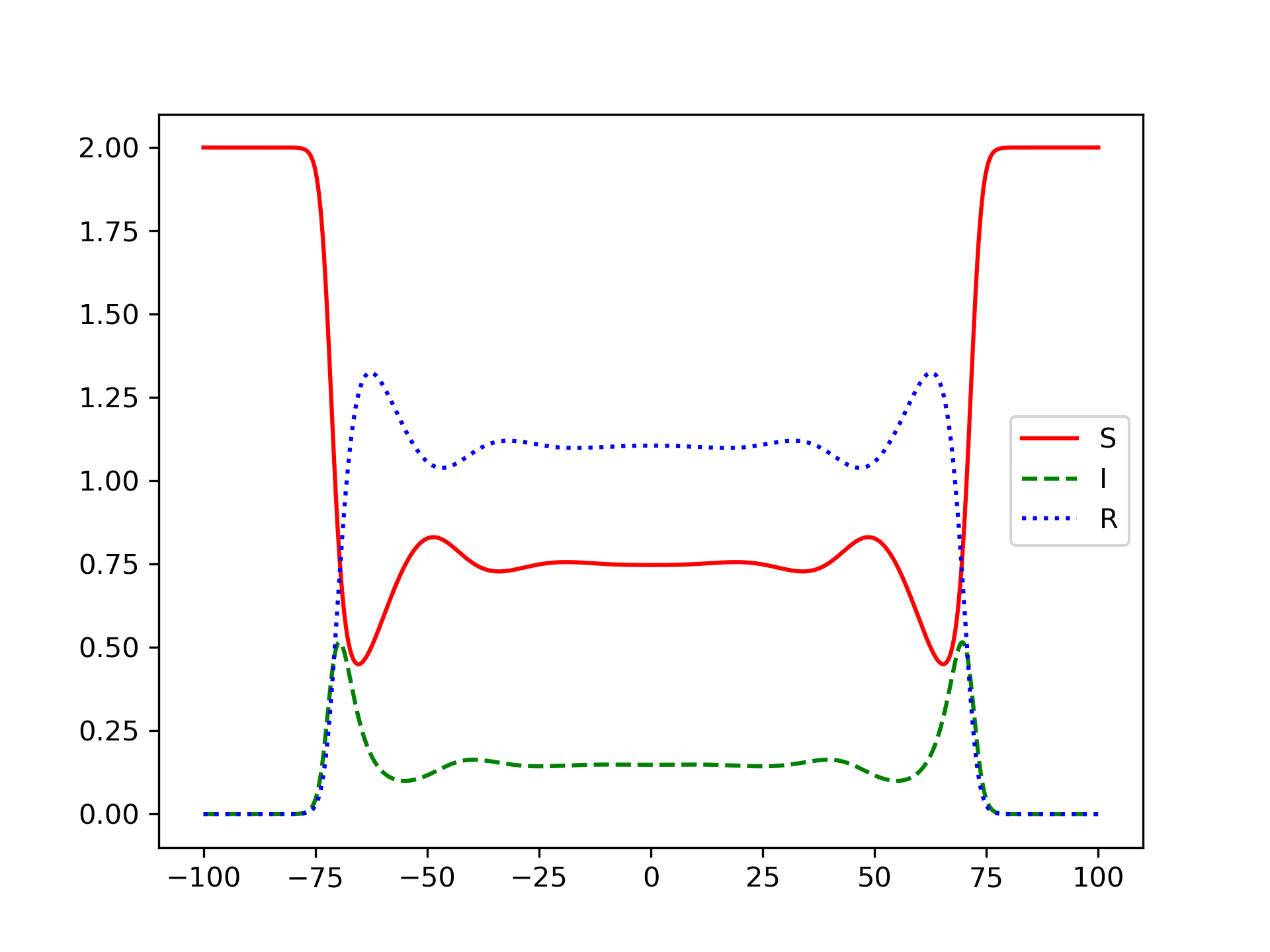}\\
\end{tabular}
\caption{\textit{Evolution of the SIRS model. Starting from a small density of infectious and a constant density of susceptible, the disease spreads by forming front-like solutions.}}
\label{fig}
\end{figure}

We now present the main result of this paper, which can be seen as an extension of Theorem \ref{th FG} to the case of SIRS models.

\subsection{Main result}

In the whole paper, we assume that the functions $\alpha,\mu,\lambda$ in \eqref{PDE1} belong to $C^\delta_{per}$, for some $\delta>0$ fixed and are all strictly positive. The diffusion constant $d$ is also supposed strictly positive.

When working with the evolution system \eqref{PDE1}, we need to complete it with an initial datum for $S,I,R$ at time $t=0$. We shall always consider 
$$ 
S(0,\cdot)=S_0, \quad I(0,\cdot)= I_0, \quad R(0,\cdot)=0,
$$ 
with $S_0 \in C^\delta_{per}$, $S_0>0$ and $I_0$ non-negative, non-zero, continuous and compactly supported. It is natural to consider such initial data: indeed, before the beginning of the epidemic, there are only few infected individuals and they are localized. Moreover, there should be no recovered individuals yet.\\

 We shall take advantage of the fact that the diffusion constants in the three equations in \eqref{PDE1} are equal to observe that
$$
N(t,x) := S(t,x)+I(t,x)+R(t,x)
$$
solves the heat equation on $\R^N$, that is, $\partial_t N = d\Delta N$, and therefore $N(t,x)$ converges uniformly toward $\fint S_0$ as $t\to +\infty$ (we denote $\fint = \lim_{R\to+\infty} \frac{1}{\vert B_R\vert}\int_{B_R}$ the average on the whole space). This allows to rewrite the system \eqref{PDE1} as the following

\begin{equation}\label{PDE2}
\left\{
\begin{array}{rll}
\partial_t I &= d \Delta I + \gamma(t,x) I - \alpha I^2 - \alpha I  R, \quad &t>0,\ x\in \R^N,\\
\partial_t R &= d \Delta R + \mu I -\lambda R, \quad &t>0,\ x\in \R^N,
\end{array}
\right.
\end{equation}
with $ \gamma(t,x) :=\alpha(x)N(t,x) - \mu(x)$. Clearly, we have 
\begin{equation}\label{def gamma}
 \gamma(t,x) \underset{\substack{t\to +\infty \\ \text{Unif}}}{\longrightarrow} \gamma^\star(x) := \alpha(x)\left(\fint S_0\right) - \mu(x).
\end{equation}
In order to state our main result, we need to introduce some notations. First, we denote $\ol I$ the largest bounded solution to
\begin{equation}\label{eq ol I}
    -d\Delta \ol I = \gamma^\star \ol I - \alpha \ol I^2,
\end{equation}
and we denote $\ol R$ the solution of
\begin{equation}\label{eq ol R}
    -d\Delta \ol R +\lambda \ol R= \mu \ol I.
\end{equation}
It follows from Theorem \ref{th FG} that $\ol I,\ol R$ are strictly positive and periodic if and only if
\begin{equation}\label{assumption1}
    \lambda_1(-d\Delta -\gamma^\star )<0,
\end{equation}
they are everywhere equal to $0$ otherwise. We shall also sometimes require the following to hold true
\begin{equation}\label{assumption2}
  \lambda_1(-d\Delta -(\gamma^\star - \alpha \ol R))<0.
\end{equation}
It turns out that \eqref{assumption2} implies \eqref{assumption1} because $\alpha \ol R \geq 0$ (this can be readily inferred from the expression of the principal eigenvalue given by the Rayleigh-Ritz formula).

When \eqref{assumption1} and \eqref{assumption2} hold true, we can define the two following speed of propagation $w_\star, w^\star$
\begin{equation}\label{def w haut}
w^\star \ \text{is the speed of propagation of }\ \partial_t u = d\Delta u +\gamma^\star u - \alpha u^2,
\end{equation}
and
\begin{equation}\label{def w bas}
  w_\star \ \text{is the speed of propagation of }\   \partial_t u = d \Delta u +(\gamma^\star - \alpha  \ol R)u - \alpha u^2.
\end{equation}
Owing to Theorem \ref{th FG}, $w_\star, w^\star$ can be expressed using the formula \eqref{eq FG}. Observe that we have $w_\star \leq w^\star$ (this can either be seen from the formula \eqref{eq FG}, or it can be directly seen as a consequence of the parabolic comparison principle: any solution of the equation in \eqref{def w haut} is supersolution for the equation in \eqref{def w bas}, and then spreads faster).

We finally define the quantity $\Lambda_0\in \R$ by
\begin{equation}\label{est lambda}
    \Lambda_0 = \frac{\max\{\mu \}\max\{ \frac{\gamma^\star}{\alpha}\}}{\min\{ \frac{\gamma^\star}{ \alpha}\}}
    \left(\frac{\max\{\alpha\}}{\min\{\alpha\} } + 1\right).
     \end{equation}
%

We are now in position to state our main result.

\begin{theorem}\label{main th}
Let $S_0, I_0$ be fixed, where $S_0 \in C^\delta_{per}$ is strictly positive and $I_0$ is continuous, compactly supported, non-negative, non-zero. Let $\gamma^\star$ be defined by \eqref{def gamma} and $(S,I,R)$ be the solution of the system \eqref{PDE1} with initial datum $(S_0,I_0,0)$. 
\begin{itemize}
    \item Assume that \eqref{assumption1} holds true and that $\min_{x\in\R^N}\lambda(x) > \Lambda_0$. Then, there is $(S^\star,I^\star,R^\star)$ stationary solution of \eqref{PDE1}, which is strictly positive and periodic. In addition,
$$
(S(t,x),I(t,x),R(t,x)) \underset{\substack{t\to+\infty \\ \text{loc. unif.}}}{\longrightarrow} (S^\star(x),I^\star(x),R^\star(x)).
$$
Moreover, denoting $w^\star, w_\star$ the speeds defined by \eqref{def w haut}, \eqref{def w bas}, we have, for all $\e >0$,
$$
\sup_{\substack{x = r e \\ r\in [0,(w_\star(e)-\e)t] , \ e\in \S}} \vert S - S^\star\vert + \vert I - I^\star \vert + \vert R - R^\star \vert \underset{t\to+\infty}{\longrightarrow} 0,
$$
and
$$
\sup_{\substack{x = r e \\ r\geq (w^\star(e)+\e)t , \ e\in \S}} \left\vert S - \fint S_0\right\vert + \vert I \vert + \vert R  \vert \underset{t\to+\infty}{\longrightarrow} 0.
$$

\item If \eqref{assumption1} is not verified, i.e, when $\lambda_1(-d\Delta -\gamma^\star)\geq 0$, then 
$$
(S(t,x),I(t,x),R(t,x)) \underset{\substack{t\to+\infty}}{\longrightarrow} \left(\fint S_0,0,0\right),
$$
uniformly in $x \in \R^N$.
\end{itemize}

\end{theorem}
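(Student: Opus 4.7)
The plan is to exploit the fact that $N := S+I+R$ satisfies the heat equation on $\R^N$, so that $N(t,\cdot) \to \fint S_0$ uniformly in $x$. This reduces the problem to the non-autonomous reduced system \eqref{PDE2} in the unknown $(I,R)$, whose time-dependent coefficient $\gamma(t,x)$ tends uniformly to $\gamma^\star(x)$. I will then split the proof into two parts: first an \emph{outer} analysis yielding the upper spreading bound at speed $w^\star$ and simultaneously covering the extinction case of the second bullet; and second an \emph{inner} analysis constructing the stationary profile $(S^\star,I^\star,R^\star)$ and yielding the lower spreading bound at speed $w_\star$.

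\textbf{Outer analysis and extinction.} Since $R\ge 0$, \eqref{PDE2} gives the pointwise inequality $\partial_t I \le d\Delta I + \gamma(t,x) I - \alpha I^2$, so that for any fixed $\e>0$, once $t$ is large, $I$ is a subsolution of the periodic KPP equation $\partial_t u = d\Delta u + (\gamma^\star+\e) u - \alpha u^2$. Applying Theorem \ref{th FG} to this equation and letting $\e\to 0^+$ yields $\limsup_t I(t,x) \le \ol I(x)$ uniformly in $x$ and $I \to 0$ uniformly on the outer region $\{x=re : r\ge (w^\star(e)+\e)t\}$. When \eqref{assumption1} fails, $\ol I\equiv 0$, so $I\to 0$ uniformly on $\R^N$. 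In both cases, injecting the decay of $\mu I$ into the linear equation $\partial_t R = d\Delta R + \mu I - \lambda R$ and using $\lambda \ge \min \lambda > 0$ for exponential dissipation gives the analogous decay (or outer decay) for $R$, after which $S = N - I - R$ is recovered. This settles the second bullet of the theorem and the outer half of the first bullet.

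\textbf{Inner analysis.} Assume now \eqref{assumption1} and $\min\lambda > \Lambda_0$. The upper bound $I\le \ol I + o(1)$ plugged into the linear equation for $R$ yields, via the parabolic maximum principle, the matching bound $R(t,x) \le \ol R(x) + o(1)$ uniformly in $x$. Since $\min\lambda > \Lambda_0$ will in particular force \eqref{assumption2}, this control on $R$ lets us view $I$ as an asymptotic supersolution for the KPP equation defining $w_\star$, whose positive periodic stationary state $\ul I_1$ spreads precisely at speed $w_\star$ by Theorem \ref{th FG}. Feeding $\ul I_1$ back into the equation for $R$ produces a lower bound $\ul R_1 \le R(t,x)+o(1)$, and iterating the two maps $I \mapsto R := (-d\Delta+\lambda)^{-1}(\mu I)$ and $R \mapsto I :=$ largest positive bounded solution of $-d\Delta I = (\gamma^\star-\alpha R)I - \alpha I^2$ generates two monotone sequences of periodic sub- and supersolutions to the stationary reduced system, sandwiching $(I(t,\cdot),R(t,\cdot))$ inside $\{x=re : r \le (w_\star(e)-\e)t\}$.

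\textbf{Main obstacle and role of $\Lambda_0$.} The crux is to prove that the two monotone sequences converge to a common limit $(I^\star, R^\star)$; this limit will also turn out to be the unique positive periodic stationary profile of \eqref{PDE2}. The maximum principle gives $\|(-d\Delta+\lambda)^{-1}(\mu I)\|_\infty \le \|\mu\|_\infty\|I\|_\infty/\lambda$, and comparing $-d\Delta I = \gamma^\star I - \alpha I^2$ at its extrema yields the uniform sandwich $\min(\gamma^\star/\alpha) \le \ol I \le \max(\gamma^\star/\alpha)$. Combining these with the Lipschitz dependence of the KPP stationary state on a small perturbation $-\alpha R$ of its growth coefficient produces a contraction estimate on the composed map, with constant of the form (explicit expression)$/\lambda$; a direct computation identifies this constant as $\Lambda_0/\lambda$, so the composition is a strict contraction precisely when $\lambda > \Lambda_0$. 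This closes the squeeze both at the stationary level, giving existence and uniqueness of $(I^\star,R^\star)$ and hence of $S^\star = \fint S_0 - I^\star - R^\star$, and at the time-dependent level inside the cone $\{r\le (w_\star(e)-\e)t\}$, producing the announced locally uniform convergence and lower spreading estimate. The principal technical hurdle is carrying this contraction analysis through while carefully propagating the $o(1)$ errors inherited from the outer step; the definition of $\Lambda_0$ is exactly tailored so that this goes through.
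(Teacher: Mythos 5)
Your architecture matches the paper's: reduce to the $(I,R)$ system via $N = S+I+R$, obtain the outer bound at speed $w^\star$ (and extinction when \eqref{assumption1} fails) by comparison with the KPP equation $\partial_t u = d\Delta u + \gamma^\star u - \alpha u^2$, then bootstrap between the linear equation for $R$ and a KPP-type equation for $I$ and close the sandwich with a contraction estimate tied to $\Lambda_0$. This is exactly Sections 2--3 of the paper (operators $T,A,Z$, Lemma~\ref{lem stable}, Propositions~\ref{prop contractivity}, \ref{prop conc} and the iteration in the proof).

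However there is a genuine gap in your inner analysis. You treat the bootstrap as if, once the first upper bound on $R$ is known, $I$ is a global supersolution of a fixed KPP equation and Theorem~\ref{th FG} directly hands you the $w_\star$-spreading and the positive periodic profile. In reality each bootstrap comparison function is itself a \emph{spreading} solution, not a stationary one: the lower barrier for $I$ at one stage, and hence the induced lower barrier on $R$, only converge to the intended periodic profile \emph{inside} a cone $\{x=re : r\le (w_\star(e)-\e)t\}$ and vanish outside. When you then use this $R$-barrier in the $I$-equation, you face a parabolic equation whose coefficients are only controlled in the cone, and you still need to prove that the solution (i) converges to the next iterate of the map $T\circ A\circ Z$ inside the cone at speed at least $w_\star$, and (ii) is bounded \emph{uniformly away from $0$} in that cone, in order to run the comparison with $M\t p$, $M<1$, and continue the iteration. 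Step (ii) is precisely the content of Step 3 in the paper's Lemma~\ref{lem KPP} and cannot be obtained by citing the Freidlin--Gartner theorem as a black box: it requires the compactly supported sub-solutions that travel at speed close to $w_\star$ constructed in the proof of Theorem~1.13 of \cite{BHN}. Without that construction (or an equivalent argument), the positivity from below inside the cone does not follow, and the "squeeze" cannot be closed. You acknowledge a "principal technical hurdle" but do not identify or address it, and this is the step that would fail as written. A smaller inaccuracy: the contraction constant is not $\Lambda_0/\lambda$; the paper's $C_{TAZ}$ involves $\lambda$ nonlinearly through the term $\min\{\gamma^\star/\alpha\} - \max\{\mu/\lambda\}\max\{\gamma^\star/\alpha\}$ in the denominator, so checking $C_{TAZ}<1$ from $\lambda>\Lambda_0$ requires the explicit estimates of Proposition~\ref{prop contractivity} and Lemma~\ref{lem Lambda}, not a one-line identification.
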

%
Let us state some comments on this result. Consider the situation when \eqref{assumption1} is not verified, that is, $\lambda_1(-d\Delta -\gamma^\star)\geq 0$.  In this case, the density $I$ goes to zero uniformly, this means that no propagation of the epidemic occurs (the only infections occurring are due to the initial density of infectious individuals). The susceptible individuals are therefore almost only subject to diffusion, and $S(t,x)$ converges to its average, $\fint S_0$. 
%

In the other case, when the epidemic spreads, the density of infectious invades space with a speed comprised between $w_\star$ and $w^\star$. The density of recovered follows the density of infectious. The dynamics of the susceptible is slightly different: at the initial time, there are already susceptible individuals everywhere in space. In the region $\{x = re, \ r\in [0,(w_\star(e)-\e)t],\ e\in \S\}$, the infectious spread and therefore consume a portion of the susceptible, who stabilize toward the limit state $ S^\star>0$. However, in the region $\{x = re, \ r>(w^\star(e)+\e)t,\ e\in \S\}$, the infectious are not there yet, so no contamination occurs. However, the susceptible are subject to diffusion everywhere in space, so they homogenize, independently of the epidemic. Therefore, in this region untouched by the epidemic (which is receding), the susceptible density asymptotically looks like $\fint S_0$.\\

As far as we are aware, even in the homogeneous case, our result Theorem \ref{main th} is new. In this case, it can be restated under a simpler and more explicit form (and we have a better lower bound for $\Lambda_0)$.

\begin{corollary}\label{cor hom}
    Assume that $\alpha,\mu,\lambda,S_0$ are positive constants. Let $I_0$ be continuous, compactly supported, non-negative, non-zero. Let $(S,I,R)$ be the solution of \eqref{PDE1} with initial datum $(S_0,I_0,0)$. 

    \begin{itemize}
        \item If $\alpha S_0 - \mu >0$ and $\lambda > \mu$, then define
    $$
    c^\star=2\sqrt{d(\alpha S_0 - \mu)}\quad \text{and}\quad c_\star = 2\sqrt{d(\alpha S_0 - \mu)\left(1-\frac{\mu}{\lambda}\right)},
    $$
    and
    $$
    (S^\star,I^\star,R^\star) = \left(\frac{\mu}{\alpha} , \frac{\alpha S_0 - \mu}{\alpha(1+\frac{\mu}{\lambda})},\frac{\mu}{\lambda}\frac{\alpha S_0 - \mu}{\alpha(1+\frac{\mu}{\lambda})} \right).
    $$
    We have, for all $\e >0$,
$$
\sup_{\vert x \vert \leq (c_\star - \e)t} \vert S - S^\star\vert + \vert I - I^\star \vert + \vert R - R^\star \vert \underset{t\to+\infty}{\longrightarrow} 0,
$$
and
$$
\sup_{\vert x \vert \geq (c^\star+\e)t} \left\vert S - S_0\right\vert + \vert I \vert + \vert R  \vert \underset{t\to+\infty}{\longrightarrow} 0.
$$

\item If $\alpha S_0 - \mu \leq 0$, then 
$$
(S(t,x),I(t,x),R(t,x)) \underset{\substack{t\to+\infty}}{\longrightarrow} \left( S_0,0,0\right),
$$
uniformly in $x \in \R^N$.
    \end{itemize}
\end{corollary}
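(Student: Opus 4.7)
The plan is to apply Theorem~\ref{main th} to the positive constants $\alpha,\mu,\lambda,S_0$ viewed as trivially $1$-periodic functions, and to compute every object in its statement explicitly. Since $I_0$ is compactly supported, $\fint S_0 = S_0$, so \eqref{def gamma} gives the constant $\gamma^\star = \alpha S_0 - \mu$. The second case of the corollary ($\alpha S_0 \leq \mu$) follows at once from the second bullet of Theorem~\ref{main th}: indeed $\lambda_1(-d\Delta - \gamma^\star) = -\gamma^\star \geq 0$, hence $(S,I,R) \to (S_0, 0, 0)$ uniformly.

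Assume now $\alpha S_0 > \mu$. The largest bounded solution of \eqref{eq ol I} is the constant $\ol I = \gamma^\star/\alpha = (\alpha S_0 - \mu)/\alpha$, and the unique bounded solution of \eqref{eq ol R} is then the constant $\ol R = \mu \ol I / \lambda$. Assumption \eqref{assumption2} reduces to $-(\gamma^\star - \alpha \ol R) = -\gamma^\star(1 - \mu/\lambda) < 0$, i.e.\ $\lambda > \mu$, precisely the corollary's hypothesis. The Freidlin-Gartner formula \eqref{eq FG} applied to \eqref{def w haut} and \eqref{def w bas} collapses to the classical KPP speed, giving isotropic $w^\star = 2\sqrt{d\gamma^\star} = c^\star$ and $w_\star = 2\sqrt{d(\gamma^\star - \alpha \ol R)} = c_\star$. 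The constant stationary state is then determined algebraically from \eqref{PDE1}: the $I$-equation forces $\alpha S^\star = \mu$, the $R$-equation gives $R^\star = \mu I^\star / \lambda$, and mass conservation $S^\star + I^\star + R^\star = S_0$ (since $N \to S_0$) yields the formulas claimed in the corollary.

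The main obstacle is that direct substitution in \eqref{est lambda} yields $\Lambda_0 = 2\mu$, whereas the corollary only assumes $\lambda > \mu$. To close this gap I would revisit the proof of Theorem~\ref{main th}: the factor $(\max\{\alpha\}/\min\{\alpha\} + 1)$ in \eqref{est lambda} is pessimistic, arising from bounding two error contributions separately, and in the homogeneous case these contributions coincide, so the factor can be replaced by $1$, producing $\Lambda_0 = \mu$. Alternatively, one can handle the remaining window $\mu < \lambda \leq 2\mu$ by a direct argument in the $(I, R)$ system obtained after substituting $S = N - I - R$ with $N \to S_0$ uniformly: the linearization around $(\ol I, \ol R)$ has spectrum with strictly negative real part exactly when $\lambda > \mu$, and explicit sub- and supersolutions built from this linearization and from the linearization at $(0, 0)$ pinch the solution between propagation fronts of speeds $c_\star - \e$ and $c^\star + \e$, yielding the two spreading estimates of the corollary.
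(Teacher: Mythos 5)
Your computation of $\gamma^\star$, $\ol I$, $\ol R$, the reduction of \eqref{assumption2} to $\lambda>\mu$, the speeds $c_\star,c^\star$, and the algebraic determination of the constant stationary state $(S^\star,I^\star,R^\star)$ all match the paper's proof and are correct. You also correctly flag the real obstacle: substituting constants into \eqref{est lambda} gives $\Lambda_0 = 2\mu$, whereas the corollary only assumes $\lambda>\mu$, so Theorem~\ref{main th} cannot be applied as a black box.

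The gap lies in how you propose to close this. Your option (a) asserts that the factor $\bigl(\max\{\alpha\}/\min\{\alpha\}+1\bigr)$ ``arises from bounding two error contributions separately, and in the homogeneous case these contributions coincide, so the factor can be replaced by $1$.'' This does not identify the actual mechanism, and as stated it is not a proof: the $+1$ comes algebraically from the denominator $\min\{\gamma^\star/\alpha\}-\max\{\mu/\lambda\}\max\{\gamma^\star/\alpha\}$ in the bound on $\min\{\ul I\}$, not from two contributions that coincide when $\alpha$ is constant (indeed $\max\alpha/\min\alpha=1$ already, yet the factor stays $2$). The observation that actually makes the proof work, and which you did not identify, is that in the homogeneous case the operator $T$ restricted to positive constants is the \emph{identity} (since $T(V)=V$ when $V>0$ is constant), and the iterates $(T\circ A\circ Z)^n(\ol I)$ in the proof of Theorem~\ref{main th} all stay constant. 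Hence the Lipschitz constant of $T$ drops from $\frac{\max\alpha\,\max\ol I}{\min\alpha\,\min\ul I}=\frac{1}{1-\mu/\lambda}$ to $1$, giving $C_{TAZ}=\mu/\lambda$, which is $<1$ precisely when $\lambda>\mu$. Your option (b), a direct linearization argument with sub- and supersolutions for the coupled $(I,R)$ system, is a sketch only; it would have to contend with the lack of a comparison principle for this non-cooperative system (the $-\alpha I R$ coupling has the wrong sign), which is exactly the difficulty the whole bootstrap scheme of the paper is designed to circumvent, so as written it cannot be accepted as a resolution of the gap.
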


The strategy of the proof is the following. First, we establish in Section \ref{sec stat} the existence of a stationary solution. This will be done by rewriting the system as a fixed point problem - the proof will follow classical lines there. The crux of the proof of Theorem \ref{main th} lies in the study of the evolution problem (convergence of solutions and estimates on the spreading speeds). It is done in Section \ref{sec evol} and relies on a bootstrap argument. We first use the parabolic comparison principle on one of the equation to get a crude estimates on one of the solution, then plug this new solution in the other equation, use again the parabolic principle, and so on. The main difficulty is to keep track of the estimates on the spreading speeds and on the convergence of functions at each iteration. The proof of the second point (the non-propagation of the epidemic when \eqref{assumption1} is not verified) is done in Proposition \ref{prop non prop}.

\begin{remark}
The following open questions arise naturally.
\begin{itemize}
    \item First, it  is natural to wonder what happens if the diffusivities on the three equations of \eqref{PDE1} are different. In this case, the total density $N =S+I+R$ is not a solution to the heat equation. It is not even clear whether it is bounded. We believe that Theorem \ref{main th} still holds true, however the proof seems out of reach for now. We mention that the same problem arises for the SIR system (when $\lambda=0$), we refer to \cite{Ducrot} for further details. However, we want to emphasize that the analysis of the system \eqref{PDE2} does not require to have the same diffusivity on the two equation.

    \item We conjecture that the condition $\lambda > \Lambda_0$ is actually not required for Theorem~\ref{main th} to hold true. Up to conducting a more detail analysis, one can indeed weaken this hypothesis, but it is not clear how (if possible) to get rid of it.

    \item We also think that, in the spreading result of Theorem \ref{main th}, one can replace $w_\star$ with $ w^\star$, that is, the spreading should occur at speed exactly $ w^\star$. This is supported by numerical experiments.
\end{itemize}

\end{remark}

\section{Existence of stationary solutions}\label{sec stat}

The goal of this section is to prove that there exist, under conditions, positive stationary solutions to \eqref{PDE1}. In order to do so, we consider the following auxiliary system
\begin{equation}\label{stat}
\left\{
\begin{array}{rll}
 -d\Delta I - \gamma^\star I + \alpha I^2 + \alpha IR &=0, \quad &x\in \R^N,\\
    -d\Delta R + \lambda R - \mu I &=0,\quad &x\in \R^N,
\end{array}
\right.
\end{equation}
where $\alpha,\mu,\lambda$ satisfy the hypotheses of Theorem \ref{main th} and $\gamma^\star \in C^\delta_{per}$ is a function, non necessarily positive.

When studying the evolution system \eqref{PDE1} in the next section, we will apply the results of the present section with $\gamma^\star$ given by \eqref{def gamma}. However, in this section, we can assume $\gamma^\star$ to be any function in $C^\delta_{per}$, and we do not need to make reference to any initial datum $S_0$ for $S$.


We shall prove the following

\begin{theorem}\label{th existence}
Let $\gamma^\star \in C^\delta_{per}$ and assume that \eqref{assumption1} and \eqref{assumption2} hold true. Then, there exists at least one positive periodic solution $(I^\star,R^\star) \in (C^\delta_{per})^2$ to \eqref{stat}.
\end{theorem}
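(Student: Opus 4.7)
The plan is to decouple system \eqref{stat} via a Schauder fixed-point argument. Given $R \in C^0_{per}$ with $R\geq 0$, I would first solve the scalar KPP-type equation
\[
-d\Delta I = (\gamma^\star - \alpha R)\,I - \alpha I^2
\]
for its unique positive periodic solution $I=I(R)$, and then define $T(R)$ as the unique periodic solution of the linear equation
\[
-d\Delta T(R) + \lambda\, T(R) = \mu\, I(R).
\]
Any fixed point $R^\star = T(R^\star)$ then delivers a positive periodic solution $(I^\star,R^\star):=(I(R^\star),R^\star)$ of \eqref{stat}.

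The natural invariant domain is $K := \{R \in C^0_{per} : 0 \leq R \leq \ol R\}$, a closed convex bounded subset, where $\ol I,\ol R$ are defined by \eqref{eq ol I}--\eqref{eq ol R} and are strictly positive and periodic thanks to \eqref{assumption1} and the stationary part of Theorem \ref{th FG}. For any $R \in K$, monotonicity of the principal periodic eigenvalue with respect to the zeroth-order coefficient (transparent on the Rayleigh--Ritz quotient, since the operator is symmetric here) gives
\[
\lambda_1(-d\Delta - (\gamma^\star - \alpha R)) \leq \lambda_1(-d\Delta - (\gamma^\star - \alpha \ol R)) < 0
\]
by \eqref{assumption2}; the elliptic part of Theorem \ref{th FG} then guarantees existence and uniqueness of the positive periodic $I(R)$, so that $T$ is well defined on $K$.

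To establish the invariance $T(K) \subset K$, I would rely on monotonicity of the KPP steady state with respect to its linear coefficient: if $\beta_1 \leq \beta_2$ in $C^\delta_{per}$ both make $-d\Delta u = \beta u - \alpha u^2$ admit a positive periodic solution, then $u_{\beta_1} \leq u_{\beta_2}$. This is obtained by noting that $u_{\beta_1}$ is a subsolution of the $\beta_2$-equation, running the $\beta_2$-parabolic flow from $u_{\beta_1}$ (which is non-decreasing in $t$ and bounded above by large constants), and invoking Theorem \ref{th FG} to identify the limit as $u_{\beta_2}$. Taking $\beta_1 = \gamma^\star - \alpha R$ and $\beta_2 = \gamma^\star$ yields $I(R) \leq \ol I$; the linear comparison principle applied to $-d\Delta + \lambda$ then gives $0 \leq T(R) \leq \ol R$.

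The remaining continuity and compactness requirements are routine. The uniform bound $I(R) \leq \ol I$ together with Schauder estimates produces a $C^{2,\delta}_{per}$ bound on $I(R)$ uniform over $R \in K$; for $R_n \to R$ in $C^0_{per}$, Arzel\`a--Ascoli extracts a $C^2$-subsequential limit which, by uniqueness, must equal $I(R)$, so the full sequence converges. The linear second equation transfers these bounds to $T(R)$, so $T:K\to K$ is continuous and $T(K)$ is relatively compact in $C^0_{per}$ via the compact embedding $C^{2,\delta}_{per}\hookrightarrow C^0_{per}$. Schauder's fixed point theorem then yields $R^\star\in K$ with $T(R^\star)=R^\star$; strict positivity of $R^\star$ follows from the strong maximum principle applied to $-d\Delta R^\star + \lambda R^\star = \mu I^\star > 0$, and elliptic regularity places $(I^\star,R^\star)$ in $(C^\delta_{per})^2$. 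The main obstacle is the monotonicity/uniqueness package for the KPP stationary equation that underpins both the definition of $T$ and the invariance of $K$; once this is granted, the remainder is a standard Schauder compactness argument.
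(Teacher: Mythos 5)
Your proposal is correct and follows essentially the same route as the paper: rewrite \eqref{stat} as a fixed-point problem for a composed operator, identify an order-interval that is left invariant thanks to the monotonicity of the KPP steady state in its zeroth-order coefficient and of the linear resolvent, and apply Schauder's fixed-point theorem. You iterate on $R$, while the paper iterates on $I$ via the map $T\circ A\circ Z$; these two maps are conjugate (your operator is, in the paper's notation, $Z\circ T\circ A$), so the arguments are interchangeable. One small technical slip worth fixing: you take your invariant box $K$ inside $C^0_{per}$ but then claim Schauder $C^{2,\delta}$ estimates on $I(R)$; since $\gamma^\star-\alpha R$ is only $C^0$ when $R\in C^0_{per}$, interior Schauder theory does not directly furnish a $C^{2,\delta}$ bound. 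Either work in $C^\delta_{per}$ from the outset, or (as the paper does) use $L^p$ elliptic estimates together with the Morrey/Sobolev embedding to get the compactness in $C^0_{per}$; with that adjustment the proof goes through.
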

As explained below in Proposition \ref{prop stat}, this theorem yields existence of stationary solutions to \eqref{PDE1} - up to choosing an appropriate $\gamma^\star$.\\

To prove Theorem \ref{th existence}, we rewrite the system \eqref{stat} as a fixed-point problem. To do so, we introduce three operators $T,A,Z$ acting on $C^\delta_{per}$.

\begin{itemize}

\item The operator $Z$ is defined by $Z(I)= R$, where $R$ is the solution to
$$
-d\Delta R +\lambda R =\mu I.
$$
$Z$ is well defined because $\lambda>0$. This operator is linear, and order preserving, that is, if $I_1\geq I_2$, then $Z(I_1)\geq Z(I_2)$.

\item The operator $A$ is the affine transformation
$$
A(R) = \frac{\gamma^\star}{\alpha} -R.
$$
This operator changes the order of inequalities: if $R_1\geq R_2$, then $A(R_1)\leq A(R_2)$ (we introduce this operator for notational simplicity in the coming proofs).

\item The (nonlinear) operator $T$ is defined so that $T(V)=I$, where $I$ is the largest periodic solution of
$$
-d\Delta I = \alpha VI - \alpha I^2.
$$
The well definiteness of $T$ is a consequence of Theorem \ref{th FG} above. If the principal periodic eigenvalue of $-\Delta - \alpha V$ is strictly negative, $T(V)$ is a positive periodic function, else, it is zero. In addition, $T$ is order preserving: if $V_1 \geq V_2$, then $T(V_1)\geq T(V_2)$.

\end{itemize}

It is easy to check that proving the existence of a solution to \eqref{stat} is equivalent to finding a fixed point for the operator $T\circ A \circ Z$ in $C^\delta_{per}$.\\

\begin{remark}\label{req stat}
Using the previous operators, we can write $\ol I = T(\frac{\gamma^\star}{\alpha})$ and $\ol R = Z(\ol I)$ (we recall that $\ol I, \ol R$ are defined above as the solutions to \eqref{eq ol I}, \eqref{eq ol R}). When $\gamma^\star$ is such that \eqref{assumption1} hold true, then 
$\ol I, \ol R$ are strictly positive.

We shall also denote $\ul I$ to be the largest solution of
\begin{equation}\label{eq ul I}
    -d\Delta \ul I = (\gamma^\star - \alpha \ol R) \ul I - \alpha \ul I^2,
\end{equation}
or, with the above notations, $\ul I = T\circ A \circ Z(\ol I)$. When $\gamma^\star$ is such that \eqref{assumption2} is verified, then $\ul I$ is also strictly positive.
\end{remark}

The proof of Theorem \ref{th existence} relies on the following lemma:
\begin{lemma}\label{lem stable}
Assume that $\gamma^\star$ is such that \eqref{assumption1} holds true. Then, the set
    \begin{equation}\label{def I}
    \mc I = \{ I \in C^\delta_{per} \ : \ \ul I \leq I \leq \ol I\}
    \end{equation}
    is non-empty and is left invariant by the composition of the three operators $T\circ A \circ Z$.
\end{lemma}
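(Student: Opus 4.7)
The statement has two parts, non-emptiness and stability, and both reduce to threading through the monotonicity properties of the three operators that are already recorded in the bullet list defining $T$, $A$, $Z$. Under hypothesis \eqref{assumption1}, $\ol I=T(\gamma^\star/\alpha)$ is a strictly positive element of $C^\delta_{per}$ and $\ol R=Z(\ol I)$ is well defined and $C^\delta_{per}$ by elliptic regularity. The function $\ul I=T\circ A\circ Z(\ol I)$ lies in $C^\delta_{per}$ as well: either \eqref{assumption2} holds and Theorem~\ref{th FG} gives a positive periodic solution, or it fails and $\ul I\equiv 0$. In particular the set $\mc I$ makes sense.

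To get non-emptiness, I will show $\ol I\in \mc I$, i.e.\ $\ul I\le \ol I$. Because $\lambda>0$, the maximum principle for periodic functions tells us $Z$ maps non-negative functions to non-negative functions, so $\ol R=Z(\ol I)\ge 0$. Hence $A(\ol R)=\gamma^\star/\alpha-\ol R\le \gamma^\star/\alpha$. Applying $T$ and using its order-preservation (recorded in the excerpt) yields
\[
\ul I \;=\; T\bigl(A(Z(\ol I))\bigr)\;\le\; T\bigl(\gamma^\star/\alpha\bigr)\;=\;\ol I.
\]

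For the invariance, take any $I\in\mc I$, so $\ul I\le I\le \ol I$. I apply the three operators in turn. First, $Z$ is linear and order-preserving (again by the maximum principle for $-d\Delta+\lambda$ on periodic functions, with $\lambda>0$), so $Z(\ul I)\le Z(I)\le Z(\ol I)=\ol R$, and moreover $Z(\ul I)\ge 0$. Second, $A$ reverses inequalities by construction, so
\[
\frac{\gamma^\star}{\alpha}-\ol R \;\le\; A(Z(I)) \;\le\; \frac{\gamma^\star}{\alpha}-Z(\ul I)\;\le\;\frac{\gamma^\star}{\alpha}.
\]
Third, applying $T$ and exploiting its order-preservation gives
\[
\ul I \;=\; T\Bigl(\frac{\gamma^\star}{\alpha}-\ol R\Bigr) \;\le\; T\bigl(A(Z(I))\bigr) \;\le\; T\Bigl(\frac{\gamma^\star}{\alpha}\Bigr) \;=\;\ol I,
\]
which means precisely $T\circ A\circ Z(I)\in\mc I$.

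The only delicate point is justifying the monotonicity of $T$ in the regime where one or both of the principal eigenvalues $\lambda_1(-d\Delta-\alpha V_1)$, $\lambda_1(-d\Delta-\alpha V_2)$ may be non-negative, so that $T(V_i)$ could vanish; this is not really an obstacle, since when $T(V_1)=0$ the inequality $T(V_1)\le T(V_2)$ is automatic, and when both are positive a standard sub-/super-solution comparison (or the Rayleigh–Ritz characterisation in the self-adjoint case, together with the classical uniqueness of the positive periodic solution from Theorem~\ref{th FG}) yields the claim. Since this monotonicity is stated as a given in the bullet list defining $T$, nothing more is required here.
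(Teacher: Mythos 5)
Your proof is correct and follows essentially the same route as the paper's: both arguments thread the chain $\ul I \le I \le \ol I$ through the order-preserving $Z$, order-reversing $A$, and order-preserving $T$, using the identities $\ol I = T(\gamma^\star/\alpha)$ and $\ul I = T\circ A\circ Z(\ol I)$ together with $Z(\ul I)\ge 0$. The only cosmetic difference is that you propagate both sides of the sandwich inequality through all three operators in a single pass, while the paper splits the invariance into two separate one-sided estimates; these are the same computation.
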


\begin{proof}
Because \eqref{assumption1} holds true, then $\ol I = T(\frac{\gamma^\star}{\alpha}) >0$. Hence $\frac{\gamma^\star}{\alpha}> \frac{\gamma^\star}{\alpha}- Z(\ol I) = A\circ Z(\ol I)$ and then, because $T$ is order preserving, we find that $\ul I  < \ol I$. Therefore, $\mc I$ is non-empty.

 Now, let $I\in \mc I$. We have
    $$
\ul I \leq I \leq \ol I.
    $$
    Therefore, because $Z,T$ are order-preserving and $A$ changes the order, we get 
    $$
T\circ A \circ Z(\ul I) \geq T\circ A \circ Z(I) \geq T\circ A \circ Z(\ol I).
    $$
    Then, using $\ul I = T\circ A \circ Z(\ol I)$, we obtain 
    $$
     \ul I \leq T\circ A \circ Z(I). 
    $$
    It only remains to prove that
    $$
    T\circ A \circ Z(\ul I) \leq \ol I.
    $$
    We have $\frac{\gamma^\star}{\alpha}\geq \frac{\gamma^\star}{\alpha}-Z(\ul I) = A\circ Z(\ul I)$. Composing this with $T$, and remembering that $\ol I = T(\frac{\gamma^\star}{\alpha})$ give the result.
\end{proof}
We did not use assumption \eqref{assumption2} in the proof of this lemma. This assumption will be needed in the proof of Theorem \ref{th existence} that comes just now: we shall find a fixed point of $T\circ A \circ Z$ in $\mc I$, and in order to be sure that we do not find the trivial fixed point $I=0$, we want to have $\ul I>0$ (so that $0\not\in \mc I)$, which is indeed guaranteed by \eqref{assumption2}.

\begin{proof}[Proof of Theorem \ref{th existence}.]
Owing to Lemma \ref{lem stable}, the set $\mc I \subset C^\delta_{per}$ is stable by $T\circ A \circ Z$, convex and closed. As we mentioned earlier, any fixed point of the map $T\circ A\circ Z$ will be a stationary solution to \eqref{stat}. Let us show that the map $T\circ A\circ Z$ indeed admits a fixed point in the set $\mc I$.

To apply the Schauder fixed-point theorem (see for instance \cite{GT}), we need $T\circ A\circ Z$ to be continuous and compact. This comes directly from standard regularity theory for elliptic equations, that we present here for completeness.

There is $C>0$ such that for any $f \in \mc I$, we have $\|f\|_{L^\infty}\leq C$ (just take $C=\|\ol I\|_{L^\infty}$). It is easy to see that $\|Z(f)\|_{L^\infty} \leq \left(\frac{\max \mu}{\min \lambda}\right)C$ and then $\|A\circ Z(f)\|_{L^\infty}\leq C$, for a possibly different $C$ (still independent of $f$). Then, if $g= T\circ A \circ Z(f)$, we have
$$
-d \Delta g = \alpha A \circ Z(f) g - \alpha g^2.
$$
Owing to the elliptic maximum principle, we have that $\|g\|_{L^\infty }\leq C$ for some $C>0$. Now, owing to $L^p$ estimates for elliptic equations (see \cite{GT}), we get
$$
\forall f \in \mc I, \quad \|T\circ A \circ Z(f)\|_{W^{2,p}}\leq C.
$$
The constant $C$ is still independent of $f$ (it depends only on $d,\alpha,\gamma^\star,\mu,\lambda,\ol I, p$). Up to taking $p$ large enough, the Morrey inequality tells us that the image of $T\circ A\circ Z$ is relatively compact in $\mc I$. Hence the result.
\end{proof}

We conclude this section with two results. The first one asserts that solutions of \eqref{stat} provide stationary solutions to \eqref{PDE1}. The second is a technical result that will be useful later on, it gives an estimate on the Lipschitz norm of $T\circ A \circ Z$.

\begin{prop}\label{prop stat} Let $M>0$ be such that $\gamma^\star(x):=\alpha(x)M-\mu(x)$ verifies the assumptions of Theorem \ref{th existence}, \eqref{assumption1} and \eqref{assumption2}.

Then, there is a positive periodic stationary solutions $(S^\star,I^\star,R^\star)$ to \eqref{PDE1}. This solution verifies $\fint (S^\star+I^\star+R^\star )  = M$.
\end{prop}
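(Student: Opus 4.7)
The plan is to apply Theorem \ref{th existence} to the function $\gamma^\star(x) := \alpha(x)M - \mu(x)$, which by hypothesis satisfies \eqref{assumption1} and \eqref{assumption2}. This produces a strictly positive periodic pair $(I^\star, R^\star) \in (C^\delta_{per})^2$ solving \eqref{stat}. I would then complete the solution by setting $S^\star := M - I^\star - R^\star$. This ansatz is dictated by the observation already used in the introduction when deriving \eqref{PDE2}: summing the three equations in \eqref{PDE1} shows that any bounded stationary total density $S+I+R$ is harmonic, hence constant; here we simply prescribe the value of this constant to be $M$.

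Next, I would verify that $(S^\star, I^\star, R^\star)$ satisfies the stationary version of \eqref{PDE1}. Using $\alpha S^\star = \alpha(M - I^\star - R^\star)$ together with $\gamma^\star = \alpha M - \mu$, the desired $I$-equation $-d\Delta I^\star = \alpha S^\star I^\star - \mu I^\star$ is a mere rewriting of the first line of \eqref{stat}, and the $R$-equation coincides with the second line. For the $S$-equation, summing the two equations of \eqref{stat} yields
\[
-d\Delta(I^\star + R^\star) = \alpha S^\star I^\star - \lambda R^\star,
\]
and since $S^\star + I^\star + R^\star \equiv M$ is constant, this rearranges to $-d\Delta S^\star + \alpha S^\star I^\star - \lambda R^\star = 0$. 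The averaging relation $\fint(S^\star + I^\star + R^\star) = M$ is then immediate.

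The main obstacle is to show the positivity $S^\star > 0$, equivalently $W^\star := I^\star + R^\star < M$ on $\R^N$. Here I would compute
\[
-d\Delta W^\star = \alpha I^\star(M - W^\star) - \lambda R^\star,
\]
and apply the elliptic maximum principle at a point $x_0 \in \R^N$ where the periodic function $W^\star$ attains its maximum. Then $-d\Delta W^\star(x_0) \geq 0$ gives $\alpha I^\star(x_0)(M - W^\star(x_0)) \geq \lambda R^\star(x_0) \geq 0$, and since $I^\star(x_0) > 0$ one concludes $W^\star \leq M$ everywhere, hence $S^\star \geq 0$. To upgrade this to a strict inequality, note that equality $W^\star(x_0) = M$ would force $R^\star(x_0) = 0$; but $R^\star$ is a non-negative periodic solution of $-d\Delta R^\star + \lambda R^\star = \mu I^\star$, and the right-hand side is strictly positive because $I^\star > 0$, so the strong maximum principle prevents $R^\star$ from vanishing at any point. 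The resulting contradiction yields $W^\star < M$, and therefore $S^\star > 0$ on $\R^N$. I expect this strict positivity step, relying on the strong maximum principle applied to the $R$-equation, to be the only non-routine part of the argument.
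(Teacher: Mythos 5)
Your proposal is correct and follows essentially the same route as the paper: apply Theorem \ref{th existence} to $\gamma^\star = \alpha M - \mu$, define $S^\star := M - I^\star - R^\star$, verify the equations by direct computation, and obtain strict positivity of $S^\star$ via the elliptic maximum principle. The only cosmetic difference is that you apply the maximum principle to $W^\star = I^\star + R^\star$ at its maximum, whereas the paper applies it to $S^\star$ at its minimum in the equation $-d\Delta S^\star + \alpha I^\star S^\star = \lambda R^\star$; the two are equivalent, and in fact since Theorem \ref{th existence} already gives $R^\star > 0$ pointwise, your inequality $\alpha I^\star(x_0)(M - W^\star(x_0)) \geq \lambda R^\star(x_0) > 0$ yields the strict bound $W^\star(x_0) < M$ directly, making your separate ``upgrade to strict inequality'' step unnecessary.
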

In particular, if an initial datum $(S_0,I_0,R_0)$ is specified for \eqref{PDE1}, and if \eqref{assumption1}, \eqref{assumption2} are verified, we can take $M=\fint S_0$ in Proposition \ref{prop stat} to find a stationary solution to \eqref{PDE1} with $\gamma^\star(x) = \alpha(x)\left(\fint S_0\right) - \mu(x)$. We shall see after that this stationary solution is the that effectively attracts the dynamics of the evolution system.
\begin{proof}
Let $\gamma^\star$ be of the specific form
$$
\gamma^\star(x) = \alpha(x)M - \mu(x),
$$
where $M$ is such that the hypotheses of Theorem \ref{th existence} are satisfied. This is always possible because $M\mapsto \lambda_1(-d\Delta -(\alpha M - \mu))$ is a decreasing function that continuously goes from $\lambda_1(-d\Delta +\mu)>0$ to $-\infty$ as $M$ goes from $0$ to $+\infty$.

Let $(I^\star,R^\star)$ be the stationary solution of \eqref{stat} for such $\gamma^\star$, provided by Theorem~\ref{th existence}. Define
$$
S^\star(x) = M - I^\star (x) - R^\star (x).
$$
Then, we have
$$
-d\Delta I^\star = \alpha I^\star S^\star - \mu I^\star, \quad -d\Delta R^\star = -\lambda R^\star +\mu I^\star,
$$
that is, the two last equations of \eqref{PDE1} are verified, and in addition we have
$$
-d\Delta S^\star = - \alpha S^\star I^\star +\lambda R^\star.
$$
Now, it remains to prove that $S^\star$, which is in $ C^\delta_{per}$ by definition, is strictly positive. Because $I^\star,R^\star >0$, the elliptic maximum principle applied to the last equation implies that $S^\star >0$ also. Hence, the triple
$$
(S^\star,I^\star,R^\star)
$$
is indeed a positive periodic stationary solution to \eqref{PDE1}. 
\end{proof}

We now turn to the following technical result that will be used in the next section - we state it here as it concerns only the operators $T,A,Z$. We denote $\|\cdot\|_{L^2} = \|\cdot\|_{L^2([0,1]^N)}$ the $L^2$ norm on the periodicity cell $[0,1]^N$.
\begin{prop}\label{prop contractivity}
    Let $I_1,I_2 \in \mc I$. Assume that $\min\{ \frac{\gamma^\star}{\alpha} \} > \max\{\frac{\mu}{\lambda}\}\max\{\frac{\gamma^\star}{\alpha} \}$. Then
    $$
    \|T\circ A\circ Z(I_1) - T\circ A\circ Z(I_2) \|_{L^2} \leq  C_{TAZ}\|I_1 - I_2\|_{L^2},
    $$
    where $C_{TAZ}=\left(\frac{\max\{\alpha\}\max\{ \frac{\gamma^\star}{\alpha}\}\max\{\mu\}}{\min\{\alpha\}\min\{\lambda\}\left( \min\{ \frac{\gamma^\star}{\alpha} \} - \max\{\frac{\mu} {\lambda}\}\max\{\frac{\gamma^\star}{\alpha} \}\} \right)}\right)$.
\end{prop}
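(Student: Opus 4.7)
The plan is to bound each of the three operators in the composition separately in $L^2$. The operator $Z$ is linear: setting $R_i := Z(I_i)$, subtracting the two defining equations and testing against $R_1-R_2$ on the periodicity cell $[0,1]^N$ gives
$$
d\int |\nabla(R_1-R_2)|^2 + \int \lambda(R_1-R_2)^2 = \int \mu(I_1-I_2)(R_1-R_2),
$$
so Cauchy--Schwarz and $\min\{\lambda\}>0$ yield $\|Z(I_1)-Z(I_2)\|_{L^2}\leq \frac{\max\{\mu\}}{\min\{\lambda\}}\|I_1-I_2\|_{L^2}$. The affine map $A$ is trivial on differences since $A(R_1)-A(R_2)=-(R_1-R_2)$, so it contributes a factor $1$.

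The main step is the $L^2$-Lipschitz estimate on $T$. Write $V_i:=A\circ Z(I_i)$ and $J_i:=T(V_i)$; by Lemma~\ref{lem stable}, $J_i\in\mc I$, so $\ul I\leq J_i\leq\ol I$. Setting $\phi := J_1-J_2$ and subtracting the two equations $-d\Delta J_i = \alpha J_i(V_i-J_i)$ produces
$$
-d\Delta \phi + \alpha(J_1+J_2-V_1)\phi = \alpha(V_1-V_2)J_2.
$$
The key algebraic move is to exploit the equation satisfied by $J_1$ itself: dividing $-d\Delta J_1=\alpha J_1(V_1-J_1)$ by $J_1$ yields $\alpha V_1=\alpha J_1 - d\Delta J_1/J_1$, hence $J_1+J_2-V_1 = J_2 + d\Delta J_1/(\alpha J_1)$. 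Testing against $\phi$ then gives
$$
d\int|\nabla\phi|^2 + \int \alpha J_2\phi^2 + d\int \frac{\Delta J_1}{J_1}\phi^2 = \int \alpha(V_1-V_2)J_2\phi.
$$
A Hardy/Picone-type computation (integrate the last left-hand term by parts, then apply Young's inequality to the resulting cross term) gives
$$
\int \frac{\Delta J_1}{J_1}\phi^2 = -2\int\frac{\phi\,\nabla J_1\cdot\nabla\phi}{J_1} + \int\frac{\phi^2|\nabla J_1|^2}{J_1^2} \geq -\int|\nabla\phi|^2,
$$
which precisely absorbs the Dirichlet energy and leaves $\int\alpha J_2\phi^2 \leq \int\alpha(V_1-V_2)J_2\phi$. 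Bounding the right-hand side by $\max\{\alpha\}\max J_2\cdot\|V_1-V_2\|_{L^2}\|\phi\|_{L^2}$ and the left-hand side below by $\min\{\alpha\}\min J_2\cdot\|\phi\|_{L^2}^2$ yields
$$
\|\phi\|_{L^2}\leq \frac{\max\{\alpha\}\max J_2}{\min\{\alpha\}\min J_2}\,\|V_1-V_2\|_{L^2}.
$$

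It remains to convert the pointwise bounds on $J_2\in\mc I$ into the explicit quantities appearing in $C_{TAZ}$. Applying the elliptic maximum principle at interior extrema of $\ol I$ in \eqref{eq ol I}, of $\ol R$ in \eqref{eq ol R} and of $\ul I$ in \eqref{eq ul I}, one obtains successively $\max \ol I\leq \max\{\gamma^\star/\alpha\}$, then $\max\ol R\leq \max\{\mu/\lambda\}\max\ol I\leq \max\{\mu/\lambda\}\max\{\gamma^\star/\alpha\}$, and finally $\min\ul I\geq \min\{\gamma^\star/\alpha\}-\max\{\mu/\lambda\}\max\{\gamma^\star/\alpha\}$, which is positive exactly under the hypothesis of the proposition. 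Composing the three $L^2$ bounds produces $C_{TAZ}$ in the stated form. The main obstacle is this third step: the nonlinearity of $T$ can only be tamed by the substitution of the equation for $J_1$ combined with the Picone-type inequality, and it is precisely this combination that delivers the clean ratio $\max J_2/\min J_2$ rather than a parasitic square root coming out of a naive Cauchy--Schwarz.
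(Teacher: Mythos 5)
Your proof is correct and follows the paper's overall strategy: the same separate $L^2$-Lipschitz bounds for $Z$, $A$, $T$, the same subtraction of the equations satisfied by $J_1=T(V_1)$ and $J_2=T(V_2)$, the same testing against $\phi=J_1-J_2$ on the periodicity cell, and the same maximum-principle endgame converting $\max\ol I$, $\max\ol R$, $\min\ul I$ into the explicit constant. The one place you genuinely diverge is in absorbing the Dirichlet energy. The paper observes that $J_1>0$ is a periodic solution of $-d\Delta J_1-\alpha(V_1-J_1)J_1=0$, so by Krein--Rutman $0$ is the principal eigenvalue of $-d\Delta-\alpha(V_1-J_1)$, and then invokes the Rayleigh--Ritz formula to get $\int d|\nabla\phi|^2-\alpha(V_1-J_1)\phi^2\geq 0$. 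You instead substitute $\alpha V_1=\alpha J_1 - d\,\Delta J_1/J_1$ (from the equation for $J_1$) into the tested identity and derive the same positivity directly from the Picone-type inequality $\int\frac{\Delta J_1}{J_1}\phi^2\geq-\int|\nabla\phi|^2$ (integration by parts plus Young, using periodicity and $J_1>0$). These are two faces of the same fact: the Picone computation is precisely what underlies the Rayleigh--Ritz characterization of the principal eigenvalue and its attainment by the positive eigenfunction. Your version is self-contained and avoids any appeal to Krein--Rutman, which is a modest but real simplification; both routes yield the identical constant $C_{TAZ}$.
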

Before proving this result, we mention that the operator $T\circ A \circ Z$ is Lipschitz over $\mc I$ even without the condition $\min\{ \frac{\gamma^\star}{\alpha} \} > \max\{\frac{\mu}{\lambda}\}\max\{\frac{\gamma^\star}{\alpha} \}$, the goal of Proposition \ref{prop contractivity} is to have an explicit bound on $C_{TAZ}$ that we will use later.
\begin{proof}
The proof is split in three steps. 
\paragraph{Step 1: Lipschitz estimate on $T$.} Let us take $V_1,V_2\in A\circ Z(\mc I)$ and let us prove that
    $$
    \|T(V_1)-T(V_2)\|_{L^2} \leq \frac{\max\{\alpha\} \max\{\ol I\}}{\min\{\alpha\} \min\{\ul I\}} \|V_1 - V_2\|_{L^2}.
    $$
    We define $I_1 = T(V_1)$ and $I_2 = T(V_2)$. Then, we have
    $$
    -d\Delta(I_1 - I_2) = \alpha V_1 (I_1 - I_2) + \alpha (V_1- V_2) I_2 - \alpha (I_1+I_2)(I_1 - I_2),
    $$
    hence
    \begin{equation}\label{eq a int}
    -d\Delta(I_1 - I_2) - \alpha (V_1 - I_1)(I_1-I_2) +\alpha I_2 (I_1 - I_2) = \alpha (V_1- V_2) I_2.
    \end{equation}
    Multiplying this equation by $(I_1-I_2)$ and integrating over $[0,1]^N$, we get
    \begin{multline*}
    \int_{[0,1]^N}\left(d\vert \nabla(I_1 - I_2)\vert^2 - \alpha (V_1 - I_1)(I_1-I_2)^2\right) +\int_{[0,1]^N}\alpha I_2 (I_1 - I_2)^2 \\ = \int_{[0,1]^N}\alpha (V_1- V_2)(I_1-I_ 2) I_2.
    \end{multline*}
    Now, by definition we have $-d\Delta I_1 = \alpha (V_1 - I_1) I_1$ with $I_1>0$ and periodic. This means that $I_1$ is an eigenfunction for operator $-d\Delta -\alpha(V_1 - I_1)$, associated with the eigenvalue $0$. However, the Krein-Rutman theorem tells us that the only eigenvalue associated with a positive eigenfunction is the principal eigenvalue. Owing to the Rayleigh-Ritz formula, this implies that
    $$
    0\leq \int_{[0,1]^N}\left(d\vert \nabla(I_1 - I_2)\vert^2 - \alpha (V_1 - I_1)(I_1-I_2)^2\right).
    $$
    Therefore, we get
    $$
    \int_{[0,1]^N}  \alpha I_2 (I_1 - I_2)^2 \leq \int_{[0,1]^N} \alpha I_2 (V_1-V_2)(I_1 - I_2),
    $$
    and then
    $$
    \min\{\alpha I_2\} \|I_1 - I_2\|_{L^2} \leq \max\{\alpha I_2\} \|V_1 - V_2\|_{L^2}.
    $$
Now, because $V_1,V_2 \in A\circ Z(\mc I)$, we have $T(V_1),T(V_2)\in \mc I$ ($\mc I$ is stable under $T\circ A\circ Z$ owing to Lemma \ref{lem stable}), hence 
$$
\min \ul I \leq I_1,I_2 \leq \max \ol I,
$$
therefore
$$
\|T(V_1) - T(V_2)\|_{L^2} \leq \frac{\max\{\alpha\} \max\{\ol I\}}{\min\{\alpha\} \min\{\ul I\}}\|V_1 - V_2\|_{L^2}.
$$

\medskip
    \paragraph{Step 2: Estimate for $Z$.}
Let $I_1,I_2 \in \mc I$ and denote $R_1 = Z(I_1), R_2 = Z(I_2)$. Clearly, we have
$$
-d\Delta(R_1 - R_2) + \lambda (R_1 - R_2) = \mu(I_1-I_2).
$$
Multiplying by $(R_1-R_2)$ and integrating this equation, we get
$$
\|Z(I_1) - Z(I_2)\|_{L^2}\leq \frac{\max\{\mu\}}{\min\{\lambda\}} \|I_1 - I_2\|_{L^2.}
$$
Combining the estimates from the first and second steps we have that, for $I_1,I_2 \in \mc I$, we have
    $$
    \|T\circ A\circ Z(I_1) - T\circ A\circ Z(I_2) \|_{L^2} \leq  \left(\frac{\max\{\alpha\}\max\{\ol I\}\max\{\mu\}}{\min\{\alpha\}\min\{\ul I\}\min\{\lambda\}}\right)\|I_1 - I_2\|_{L^2}.
    $$

\medskip  
    \paragraph{Step 3: Conclusion.}
    To conclude, we need to estimate $ \max\{\ol I\}$ and $\min\{\ul I\} $ using the parameters. Recall that $\ol I$ solves $-d\Delta \ol I = \gamma^\star \ol I - \alpha \ol I^2$. Denoting $x_0$ a point where $\ol I$ reaches its maximum, we have $\gamma^\star(x_0) \ol I(x_0) - \alpha(x_0) \ol I^2(x_0) \geq 0$, so that 
    $$
    \max\{\ol I\} \leq \max\{\frac{\gamma^\star}{\alpha}\}. 
    $$
    Similarly, we find that
    $$
    \max\{\ol R\} \leq \max\{\frac{\mu}{\lambda}\}\max\{\ol I\}\leq \max\{\frac{\mu}{\lambda}\}\max\{\frac{\gamma^\star}{\alpha}\}
    $$
    and 
    $$
    \min\{\ul I\} \geq \min\{\frac{\gamma^\star}{\alpha} - \ol R\} \geq \min\{ \frac{\gamma^\star}{\alpha} \} - \max\{\frac{\mu}{\lambda}\}\max\{\frac{\gamma^\star}{\alpha}\}.
    $$
    Therefore, using these two estimates together with the assumption $\min\{ \frac{\gamma^\star}{\alpha} \} > \max\{\frac{\mu}{\lambda}\}\max\{\frac{\gamma^\star}{\alpha} \}$, we obtain
    \begin{multline*}
    \|T\circ A\circ Z(I_1) - T\circ A\circ Z(I_2) \|_{L^2} \leq  \left(\frac{\max\{\alpha\}\max\{\ol I\}\max\{\mu\}}{\min\{\alpha\}\min\{\ul I\}\min\{\lambda\}}\right)\|I_1 - I_2\|_{L^2}\\
     \leq  \left(\frac{\max\{\alpha\}\max\{\frac{\gamma^\star}{\alpha}\}\max\{\mu\}}{\min\{\alpha\}\left(\min\{ \frac{\gamma^\star}{\alpha} \} - \max\{\frac{\mu}{\lambda}\}\max\{\frac{\gamma^\star}{\alpha} \}\right)\min\{\lambda\}}\right)\|I_1 - I_2\|_{L^2},
    \end{multline*}
which concludes the proof.
\end{proof}

In the next lemma, we show that if $\lambda > \Lambda_0$ is verified (which is required in Theorem \ref{main th}), the operator $T\circ A \circ Z$ is a contraction. This will be useful in the next section.

We also prove that under this same condition, the assumption \eqref{assumption2} (required by Theorem \ref{th existence}) automatically holds true (hence, under Theorem \ref{main th}'s hypotheses, the system \eqref{stat} with $\gamma^*$ given by \eqref{def gamma} admits at least one positive periodic solution).


\begin{lemma}\label{lem Lambda}
Let $\Lambda_0$ be defined by \eqref{est lambda}. Assume that \eqref{assumption1} is verified and that $\lambda > \Lambda_0$.
    \begin{itemize}

        \item Assumption \eqref{assumption2} is verified. 

         \item The operator $T\circ A\circ Z$ is a contraction. 
    \end{itemize}
\end{lemma}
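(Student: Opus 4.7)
The plan is to handle the two bullet points separately, exploiting explicit $L^\infty$ bounds on $\ol I$ and $\ol R$ coming from the elliptic maximum principle applied to \eqref{eq ol I} and \eqref{eq ol R}. First, I would record the elementary estimates already appearing in Step~3 of the proof of Proposition~\ref{prop contractivity}: evaluating $-d\Delta \ol I = \gamma^\star \ol I - \alpha \ol I^2$ at a minimum point $x_0$ of $\ol I$ gives $\alpha(x_0)\ol I(x_0)\geq \gamma^\star(x_0)$, hence $\min\{\ol I\}\geq \min\{\gamma^\star/\alpha\}$, while evaluating the $\ol R$-equation at a maximum point yields $\max\{\ol R\}\leq (\max\{\mu\}/\min\{\lambda\})\max\{\ol I\}\leq (\max\{\mu\}/\min\{\lambda\})\max\{\gamma^\star/\alpha\}$.

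Next, since the factor $\max\{\alpha\}/\min\{\alpha\}+1$ is always at least $2$, one has $\Lambda_0\geq 2\,\max\{\mu\}\max\{\gamma^\star/\alpha\}/\min\{\gamma^\star/\alpha\}$, so the hypothesis $\min\{\lambda\}>\Lambda_0$ in particular gives $\max\{\mu\}\max\{\gamma^\star/\alpha\}<\min\{\lambda\}\min\{\gamma^\star/\alpha\}$. Combined with the two bounds above this immediately yields the strict pointwise inequality $\ol R(x)<\ol I(x)$ for every $x\in\R^N$, hence $\gamma^\star-\alpha\ol R>\gamma^\star-\alpha\ol I$ pointwise.

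For the first bullet, the key observation is that $\ol I$ itself is a strictly positive periodic eigenfunction of $-d\Delta-(\gamma^\star-\alpha\ol I)$ with eigenvalue $0$; by Krein--Rutman this must be the principal eigenvalue, i.e.\ $\lambda_1(-d\Delta-(\gamma^\star-\alpha\ol I))=0$. Strict monotonicity of the principal eigenvalue under enlargement of the zero-order coefficient (visible directly from the Rayleigh--Ritz formula, or by testing the associated quadratic form against $\ol I$) then turns the pointwise inequality $\gamma^\star-\alpha\ol R>\gamma^\star-\alpha\ol I$ into $\lambda_1(-d\Delta-(\gamma^\star-\alpha\ol R))<0$, which is exactly \eqref{assumption2}.

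For the second bullet, I would simply plug $\min\{\lambda\}>\Lambda_0$ into Proposition~\ref{prop contractivity}. The hypothesis $\min\{\gamma^\star/\alpha\}>\max\{\mu/\lambda\}\max\{\gamma^\star/\alpha\}$ required there is the strict inequality obtained in the second paragraph, so it holds. Writing $A=\max\{\alpha\}/\min\{\alpha\}$, $B=\max\{\gamma^\star/\alpha\}$, $b=\min\{\gamma^\star/\alpha\}$, $M=\max\{\mu\}$, $L=\min\{\lambda\}$, the explicit constant simplifies to $C_{TAZ}=ABM/(Lb-MB)$, and $C_{TAZ}<1$ rearranges exactly to $L>MB(A+1)/b=\Lambda_0$. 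I expect no genuine obstacle: the structural point is the pointwise comparison $\ol R<\ol I$ in the first two paragraphs; the contraction part is a direct arithmetic check.
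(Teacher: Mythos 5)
Your argument is correct, and for the first bullet it follows a genuinely different, and arguably cleaner, route than the paper's. The paper bounds $\lambda_1\bigl(-d\Delta-(\gamma^\star-\alpha\ol R)\bigr)$ by dividing the eigenvalue equation by the eigenfunction, integrating by parts, and obtaining $\lambda_1\leq -\min\{\gamma^\star-\alpha\ol R\}$; it then uses the $L^\infty$ bound on $\ol R$ to show the right-hand side is negative once $\lambda>\Lambda_0$. You instead exploit the exact identity $\lambda_1\bigl(-d\Delta-(\gamma^\star-\alpha\ol I)\bigr)=0$ (since $\ol I$ is a positive periodic eigenfunction with eigenvalue $0$), establish the pointwise comparison $\ol R<\ol I$ from the two maximum-principle bounds $\max\{\ol R\}\leq \tfrac{\max\mu}{\min\lambda}\max\{\gamma^\star/\alpha\}$ and $\min\{\ol I\}\geq\min\{\gamma^\star/\alpha\}$, and then invoke strict monotonicity of $\lambda_1$ in the zero-order coefficient. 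The paper's approach requires only the upper bound on $\ol R$ and a one-sided variational estimate; yours requires both the lower bound on $\ol I$ (which the paper never records) and the monotonicity of the principal eigenvalue, but in exchange it reveals the geometric picture ($\ol R$ is strictly dominated by $\ol I$) behind the inequality. For the second bullet you carry out explicitly the computation that the paper dismisses as ``easy'': the simplification $C_{TAZ}\leq ABM/(Lb-MB)$ and the rearrangement of $C_{TAZ}<1$ into $L>\Lambda_0$ are exactly right.

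One small point worth flagging (present in the paper as well as in your write-up): both arguments tacitly assume $\min\{\gamma^\star/\alpha\}>0$, which is needed for $\Lambda_0$ to be a meaningful positive threshold and for the rearrangement of $\lambda>\Lambda_0$ into $\max\{\mu\}\max\{\gamma^\star/\alpha\}<\min\{\lambda\}\min\{\gamma^\star/\alpha\}$. This does not follow from \eqref{assumption1} alone, so it is an implicit standing hypothesis; it would be worth making it explicit.
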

\begin{proof}
By definition of $\lambda_1(-d\Delta - (\gamma^\star - \alpha \ol R))$, there is $\phi \in C^\delta_{per}$, $\phi>0$ such that
$$
-d\Delta\phi  - (\gamma^\star - \alpha \ol R)\phi = \lambda_1(-d\Delta - (\gamma^\star - \alpha \ol R))\phi.
$$
Dividing this by $\phi$ and integrating by part, we find that
$$
-d \int_{[0,1]^N}\frac{\vert\nabla\phi\vert^2}{\phi^2} - \int_{[0,1]^N}(\gamma^\star - \alpha \ol R) = \lambda_1(-d\Delta - (\gamma^\star - \alpha \ol R)),
$$
therefore
\begin{eqnarray*}
    \lambda_1(-d\Delta - (\gamma^\star - \alpha \ol R)) &\leq& -\min\{\gamma^\star - \alpha \ol R\} \\ &\leq& -\min\{\alpha\}\min\left\{\frac{\gamma^\star}{\alpha} - \ol R\right\} \\
    &\leq&-\min\{\alpha\}\left(\min\{ \frac{\gamma^\star}{\alpha} \} - \max\{\frac{\mu}{\lambda}\}\max\{\frac{\gamma^\star}{\alpha}\} \right) \\ &\leq& -\min\{\alpha\}\left(\min\{ \frac{\gamma^\star}{\alpha} \} - \left\{\frac{\max \mu}{\min \lambda}\right\}\max\{\frac{\gamma^\star}{\alpha}\} \right)
\end{eqnarray*}
where the used the estimate on $\ol R$ obtained in the proof of Proposition \ref{prop contractivity}, step 3.

Therefore, if $\lambda > \Lambda_0$, then we have in particular $\lambda >\frac{\max\{\mu\}\max\{\frac{\gamma^\star}{\alpha}\}}{\min\{\frac{\gamma^\star}{\alpha}\}}$ and then $\lambda_1(-d\Delta - (\gamma^\star - \alpha \ol R)) < 0$, that is, \eqref{assumption2} holds true.

The fact that the Lipschitz constant $C_{TAZ}$ in \eqref{prop contractivity} is strictly lesser than $1$ when $\lambda > \Lambda_0$ follows from an easy computation.
\end{proof}

\section{Long-time behavior of the evolution system.}\label{sec evol}

In this section, we consider the system \eqref{PDE1} with a given initial datum $(S_0,I_0,0)$ satisfying the hypotheses of Theorem \ref{main th}. Take $\gamma^\star$ to be the function defined in \eqref{def gamma}.

We start with proving the first point of Theorem \ref{main th}. From now on, we assume that \eqref{assumption1} holds true and that $\lambda > \Lambda_0$, where $\Lambda_0$ is given by \eqref{est lambda}. Lemma \ref{lem Lambda} implies then that \eqref{assumption2} holds true. Therefore, we can apply Theorem \ref{th existence} to find
$$
(I^\star,R^\star)\in C^\delta_{per}, 
$$
positive periodic stationary solution to \eqref{stat} which is a fixed point of $T\circ A \circ Z$.

We now prove the convergence of the solutions of \eqref{PDE2} toward $(I^\star,R^\star)$, together with estimates on the speed of propagation.\\

Again, we denote $\ol I = T(\frac{\gamma^\star}{\alpha})$ and $ \ul I = T\circ A\circ Z(\ol I)$ as in the previous section. These functions are positive because of the assumptions \eqref{assumption1}, \eqref{assumption2}. We also recall that we denote $w^\star, w_\star$ the speeds of propagation given by \eqref{def w haut}, \eqref{def w bas}.

We start with some preliminary definitions and reminders from regularity theory for parabolic equations.

\subsection{Definitions}

We define the space $C^\delta_{loc.unif}$ of functions which are locally uniformly Hölder continuous. To do so, we denote, for $R>0$ and $(t,x) \in \R^{N+1}$, the parabolic cylinder $Q_R(t,x) := (t,x)+(-R^2,0]\times B_R(0)$. For a function $u(t,x)$ we denote $\|u\|_{\delta,Q_R(t,x)} $ the parabolic Hölder norm, that is,
$$
\|u\|_{\delta,Q_R(t,x)} =\|u\|_{L^\infty(Q_R(t,x))}+\sup_{(t_1,x_1)\neq (t_2,x_2) \in Q_R(t,x)}\frac{|u(t_1,x_1) - u(t_2,x_2)|}{(|t_1 - t_2|^{\frac{1}{2}}+|x_1 - x_2|)^\delta}.
$$
We also denote the $C^{2,\delta}$ norm by $\|u\|_{2+\delta,Q} = \|\partial_t u\|_{\delta,Q} +\|D^2 u\|_{\delta,Q} + \|Du\|_{L^\infty(Q)}+\|u\|_{L^\infty(Q)}$.
Then, we introduce
\begin{multline*}
C^{\delta}_{loc.unif} :=\{ u \in C^{1,2}(\R^+\times\R^N)\ : \ \forall R>0, t>R^2+ 1, \ x\in \R^N, \\ \exists C_R>0\ \text{independent of $(t,x)$} \ :\ \|u\|_{\delta,Q_R(t,x)} \leq C_R\}.
\end{multline*}
 We emphasize that the constant $C_R$ in the definition of $C^\delta_{loc.unif}$ is independent of the center of the parabolic cylinder $(t,x)$ (but may depend on its size).

\vspace{0.2cm}
Now let $\mc S(p,\ul w, \ol w)$ be the space of functions which converge to $p\in C^\delta_{per}$ with speed at least $\ul w$ and at most $\ol w$,
\begin{multline*}
\mc S(p,\ul w, \ol w) :=\Big\{ u\geq 0,\ u\in L^\infty\cap C^\delta_{loc.unif} \ : \  \forall \e>0, \sup_{\substack{x = r e \\ r\in [0,(\ul w(e)-\e)t], e\in \S}} \vert u - p\vert \cv 0 , \\ \ \sup_{\substack{x = r e \\ r\geq (\ol w(e)+\e)t, e\in \S}}  \vert u \vert \cv 0\Big\}.
\end{multline*}
 The space $C^\delta_{loc.unif}$ will be useful as we will often work with translated functions of the form $u_n(t,x) = u(t+t_n,x+t_n)$. If $u$ is locally Hölder continuous, so is $u_n$, but the Hölder constant could depend on the translation point $(t_n,x_n)$. Having $u\in C^{\delta}_{loc.unif}$ ensures that the sequence $(u_n)_{n\in \N}$ is locally Hölder continuous with a Hölder constant that does not depend on $n$.

 We recall the following standard regularity result
\begin{prop}\label{prop reg}
Let $R>0,t_0,x_0\in\R^+\times \R^N$ be fixed and let $Q = Q_R(t_0,x_0)$ and $\t Q = Q_{R/2}(t_0,x_0)$ be two parabolic cylinder. Let $c,f \in C^\delta(Q)$ and let $u(t,x) \in C^{1,2}(Q)$ be such that
    $$
    \partial_t u -d\Delta u - cu= f(t,x),\quad (t,x) \in Q.
    $$
    Then, there is $C>0$ (depending only on $\|c\|_{\delta,Q},N,d,\delta$ and $R$), such that
        $$
        \|u\|_{2+\delta,\t Q}\leq C(\|f\|_{\delta,Q}+\|u\|_{L^\infty(Q)}).
        $$
\end{prop}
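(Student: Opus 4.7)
The statement is the classical interior parabolic Schauder estimate applied to the linear uniformly parabolic operator $L = \partial_t - d\Delta - c(t,x)$ with zero-order coefficient $c \in C^\delta$; it can be derived from standard parabolic regularity theory (as in Lieberman or Ladyzhenskaya--Solonnikov--Ural'ceva). My plan is to reduce it in three steps to the constant-coefficient heat estimate.

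First, recall the interior Schauder estimate for the pure heat operator: if $\partial_t v - d\Delta v = g$ on a parabolic cylinder $Q_{\rho'}$, then for any concentric $Q_\rho \subset\subset Q_{\rho'}$ one has
$$\|v\|_{2+\delta, Q_\rho} \leq C_{\rho,\rho'}\bigl(\|g\|_{\delta, Q_{\rho'}} + \|v\|_{L^\infty(Q_{\rho'})}\bigr),$$
with $C_{\rho,\rho'}$ depending only on $N,d,\delta$ and the radii $\rho,\rho'$. This can be proved via heat-kernel representation together with Campanato-type characterizations of Hölder spaces, or quoted directly.

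Second, rewrite our equation as $\partial_t u - d\Delta u = f + cu$. Since $c$ and $u$ both lie in $C^\delta$, the product estimate for Hölder norms gives
$$\|f+cu\|_{\delta,Q_{\rho'}} \leq \|f\|_{\delta,Q} + C\|c\|_{\delta,Q}\,\|u\|_{\delta,Q_{\rho'}},$$
so applying Step~1 on a pair $Q_\rho \subset Q_{\rho'} \subset Q$ (with $R/2 \leq \rho < \rho' \leq R$) yields
$$\|u\|_{2+\delta,Q_\rho} \leq C\bigl(\|f\|_{\delta,Q} + \|u\|_{L^\infty(Q)} + \|c\|_{\delta,Q}\|u\|_{\delta,Q_{\rho'}}\bigr).$$

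Third, to remove the undesired $\|u\|_{\delta,Q_{\rho'}}$ on the right, use the standard interpolation inequality
$$\|u\|_{\delta,Q_{\rho'}} \leq \varepsilon\,\|u\|_{2+\delta,Q_{\rho'}} + C_\varepsilon\|u\|_{L^\infty(Q_{\rho'})},\qquad \varepsilon>0,$$
and iterate the resulting bound on a sequence of cylinders with radii $\rho_n \downarrow R/2$ and $\rho_{n+1}-\rho_n$ geometric. This is the classical absorption scheme (analogous to the one used for elliptic Schauder estimates in Gilbarg--Trudinger): choosing $\varepsilon$ small enough along the sequence, the $\|u\|_{2+\delta}$-term on the right is absorbed into the left hand side, leaving the desired estimate on $\tilde Q = Q_{R/2}$ with constant depending only on $\|c\|_{\delta,Q},\,N,\,d,\,\delta$ and $R$. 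The main technical point is in this last step, namely tracking precisely how the constants $C_{\rho,\rho'}$ blow up as $\rho'-\rho \to 0$ so that the geometric iteration converges; once Step~1 is taken for granted, nothing else in the argument is genuinely delicate, which is why one can legitimately call the proposition a standard recalled result.
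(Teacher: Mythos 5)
The paper offers no proof of Proposition~\ref{prop reg}: it simply refers to Lieberman (\cite{Lieb}, Chapter~4), treating the interior parabolic Schauder estimate as a recalled standard fact. Your three-step sketch (constant-coefficient heat estimate, move $cu$ to the right-hand side, interpolate $\|u\|_{\delta}\le\varepsilon\|u\|_{2+\delta}+C_\varepsilon\|u\|_{L^\infty}$, then absorb on a shrinking family of cylinders) is a correct rendering of the classical argument that the citation covers, and the only genuine subtlety, which you already flag, is tracking the blow-up of $C_{\rho,\rho'}$ as $\rho'-\rho\to 0$ so that the geometric iteration closes -- this is the standard absorption lemma of Giaquinta or Gilbarg--Trudinger type.
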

We refer to \cite{Lieb}, Chapter 4, for a proof of this fact. Observe that $C^\delta_{per} \subset C^\delta_{loc.unif}$.

\subsection{Preliminary results.}

This section contains two lemmas that characterize how the solutions of the first and second equations of \eqref{PDE2} behave when freezing the functions $I$ and $R$ respectively. This will be the core of our bootstrap argument.

\begin{lemma}\label{lem lin}
Let $p \in C^\delta_{per}$ be strictly positive. Let $0<\ul w \leq \ol w $ be continuous positive functions from the sphere $\S$ to $\R^+_\star$. If $u\in \mc S(p,\ul w,\ol w)$, and if $v$ satisfies
    $$
    \partial_t v = d\Delta v - \lambda v +\mu u, \quad t>0,\ x\in \R^N,
    $$
    with initial datum $v_0$ continuous, non-negative and compactly supported, then
    $$
    v \in S(Z(p),\ul w,\ol w).
    $$
\end{lemma}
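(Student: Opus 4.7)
The equation for $v$ is linear with a positive zero-order coefficient $\lambda$, and the strategy is to combine parabolic regularity, a translation/compactness argument, and a Liouville-type uniqueness result for the limiting linear equation.

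\emph{Step 1: boundedness and regularity.} Since $u\in L^\infty$ and $v_0$ is bounded, the parabolic comparison principle yields $0\le v\le \|v_0\|_{L^\infty}+\frac{\|\mu\|_{L^\infty}\|u\|_{L^\infty}}{\min\lambda}$ on $\R^+\times\R^N$ (compare with the constant super-solution obtained by dropping $d\Delta$). Applying Proposition~\ref{prop reg} with $c\equiv-\lambda$ and source $f=\mu u\in C^\delta_{loc.unif}$ on parabolic cylinders $Q_1(t,x)$ for $t\ge 2$ gives a Hölder bound independent of the center, so $v\in C^\delta_{loc.unif}$.

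\emph{Step 2: behind the front.} Fix $\e>0$. I will show that along any sequence $(t_n,x_n)$ with $t_n\to+\infty$ and $x_n=r_ne_n$, $r_n\in[0,(\ul w(e_n)-\e)t_n]$, one has $v(t_n,x_n)-Z(p)(x_n)\to 0$; this proves the uniform convergence on the behind region. Write $x_n=k_n+y_n$ with $k_n\in\Z^N$, $y_n\in[0,1]^N$, extract a subsequence so that $y_n\to y_\infty$, and define $v_n(t,x):=v(t+t_n,x+x_n)$, $u_n(t,x):=u(t+t_n,x+x_n)$. Since $\alpha,\lambda,\mu,p$ are periodic, $\alpha(x+x_n)=\alpha(x+y_n)\to\alpha(x+y_\infty)$ locally uniformly, and similarly for $\lambda,\mu,p$. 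The hypothesis $u\in\mc S(p,\ul w,\ol w)$, the fact that $(t+t_n,x+x_n)$ still lies in the behind region for $n$ large and $(t,x)$ in a fixed compact (because $r_n/t_n+O(1/t_n)\le \ul w(e_n)-\e/2$ eventually), and the periodicity of $p$ imply $u_n\to p(\cdot+y_\infty)$ locally uniformly. The $v_n$ satisfy
\[
\partial_t v_n-d\Delta v_n+\lambda(x+y_n)v_n=\mu(x+y_n)u_n,
\]
so Proposition~\ref{prop reg} and the uniform $L^\infty$ bound from Step 1 give local $C^{2+\delta}$ bounds; up to a further extraction $v_n\to v_\infty$ locally uniformly, where $v_\infty$ is bounded on $\R\times\R^N$ and solves
\[
\partial_t v_\infty-d\Delta v_\infty+\lambda(x+y_\infty)v_\infty=\mu(x+y_\infty)p(x+y_\infty).
\]

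\emph{Step 3: Liouville uniqueness and conclusion behind.} The function $W:=v_\infty-Z(p)(\cdot+y_\infty)$ is bounded on $\R\times\R^N$ and satisfies $\partial_tW-d\Delta W+\lambda(x+y_\infty)W=0$. Since $\lambda\ge\lambda_{\min}:=\min\lambda>0$, comparing $W$ with the spatially constant super/sub-solutions $\pm\|W\|_{L^\infty}e^{-\lambda_{\min}(t-s)}$ issued from time $s$ and letting $s\to-\infty$ forces $W\equiv 0$, so $v_\infty=Z(p)(\cdot+y_\infty)$. Evaluating at $(0,0)$ yields $v(t_n,x_n)\to Z(p)(y_\infty)$, while $Z(p)(x_n)=Z(p)(y_n)\to Z(p)(y_\infty)$ by continuity and periodicity of $Z(p)$, giving the desired convergence along the subsequence. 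A standard subsequence-of-subsequence argument extends it to the full sequence.

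\emph{Step 4: beyond the front.} With the same translation procedure applied to a sequence $(t_n,x_n)$ with $r_n\ge(\ol w(e_n)+\e)t_n$, the assumption $u\in\mc S(p,\ul w,\ol w)$ gives $u_n\to 0$ locally uniformly (after possibly extracting so that $e_n\to e_\infty$, the point $(t+t_n,x+x_n)$ lies in a slightly narrower beyond region for large $n$ thanks to the continuity of $\ol w$). The limit $v_\infty$ now solves the homogeneous equation $\partial_tv_\infty-d\Delta v_\infty+\lambda(x+y_\infty)v_\infty=0$ on $\R\times\R^N$ and is bounded, so the same Liouville argument gives $v_\infty\equiv 0$ and $v(t_n,x_n)\to 0$. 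The principal obstacle is this entire-solution Liouville step: one must exploit the strict positivity of $\lambda$ to rule out nontrivial bounded eternal solutions; fortunately the exponential damping argument above is robust and applies in both regimes, giving the lemma.
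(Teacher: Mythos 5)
Your proof is correct and follows the same overall framework as the paper's: establish uniform $L^\infty$ and $C^\delta_{loc.unif}$ bounds, translate along a sequence $(t_n,x_n)$ lying in the moving region, extract a bounded entire solution $v_\infty$ of the limit linear equation (source $\mu p$ behind the front, source $0$ beyond), and show this entire solution is uniquely determined so that the translated values converge to $Z(p)$ (resp.\ $0$). The one place where your argument genuinely diverges is the uniqueness step for $v_\infty$. The paper uses a monotone-dynamics squeeze: it compares $v_\infty$ with solutions $\ul v^T$ and $\ol v^T$ started at time $-T$ from the ordered stationary sub/super-solutions $0$ and a large constant $M$, invokes the time-monotonicity and convergence of these solutions to the unique bounded stationary state $Z(p)$, and lets $T\to+\infty$. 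You instead pass to the difference $W = v_\infty - Z(p)(\cdot+y_\infty)$, which is a bounded entire solution of the homogeneous damped equation $\partial_t W - d\Delta W + \lambda W = 0$, and kill it by comparing with the barriers $\pm\|W\|_{L^\infty}e^{-\lambda_{\min}(t-s)}$ and sending $s\to-\infty$. Your Liouville argument is somewhat more direct: it isolates exactly where the positivity of $\lambda$ is used (as an exponential damping rate) and avoids having to discuss time-monotone convergence of the auxiliary flows; it also handles the ``behind'' and ``beyond'' cases by the very same computation. The paper's version has the advantage of being the identical technique reused in the much harder nonlinear Lemma~\ref{lem KPP}. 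A further, purely cosmetic, difference is that you recenter at $x_n$ (so the limit coefficients carry a shift $y_\infty$), whereas the paper recenters at the integer part $k_n$ so that the periodic coefficients are left untouched; the two choices are interchangeable.
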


\begin{proof}
First, observe that $v$ is uniformly bounded. Indeed, the constant function everywhere equal to 
$$
\max\left\{\|v_0\|_{L^\infty(\R^N)}, \left\|\frac{\mu u}{\lambda}\right\|_{L^\infty(\R^+\times \R^N)}\right\}
$$
is supersolution of the equation satisfied by $v$ and is larger than $v$ at initial time.

Moreover, $\mu \in C^\delta_{per}$ and $u\in C^\delta_{loc.unif}\cap L^\infty$, then $\mu u \in C^\delta_{loc.unif}\cap L^\infty$. The regularity result Proposition \ref{prop reg} implies that we have $v\in C^{\delta}_{loc.unif}$.

    Now, it remains to show that, for any $\e>0$,
    \begin{equation}\label{lem int}
    \sup_{\substack{x = re \\ r\in [0,(\ul w(e)-\e)t], e\in \S} } \vert v - Z(p)\vert \to 0,
     \end{equation}
    and
    \begin{equation}\label{lem ext}
    \sup_{\substack{x = re \\ r\geq (\ol w(e)+\e)t, e\in \S} } \vert v \vert \to 0.
   \end{equation}

\paragraph{Step 1. Proof of \eqref{lem int}: reduction to a problem on entire solutions.} Let $\e>0$ be chosen and take two sequences $(t_n)_{n\in \N},(x_n)_{n\in \N}$ such that $t_n \to+\infty$, $x_n \in \{x = re,  \ r \in [0,(\ul w(e)-\e)t_n], \ e\in \S\}$, and
$$
 \vert v(t_n,x_n) - Z(p)(x_n)\vert  = \sup_{\substack{x = re \\ r\in [0, (\ul w(e)-\e)t_n]} } \vert v - Z(p)\vert.
$$
We define two sequences $(k_n)_{n\in \N} \in \Z^N$ and $(y_n)_{n\in \N} \in [0,1)^\N$ so that $x_n = k_n + y_n$. Then, define the translations $v_n(t,x):= v(t +t_n, x +k_n)$ and $u_n(t,x):= u(t+t_n,x + k_n)$. The function $u_n$ satisfies (using that $\lambda,\mu$ are periodic)
    $$
    \partial_t v_n = d\Delta v_n - \lambda v_n +\mu u_n,\quad t>-t_n,\ x\in \R.
    $$
    Since $u\in \mc S(p,\ul w, \ol w)$, $u_n(t,x) \underset{n\to+\infty}{\longrightarrow} p$, locally uniformly.
    In addition, Proposition \ref{prop reg} implies that $v_n$ is $C^{2+\delta}$ on every compact with a constant independent of $n$, since $ - \lambda v_n +\mu u_n$ is bounded uniformly in $t,x$ (independently of $n$) and belongs to $C^\delta_{loc. unif}$. Hence, up to extraction, the Arzelà-Ascoli theorem guarantees that $v_n$, $\partial_t v_n$, $\Delta v_n$ converge uniformly in $(t,x)$ on every compact of $\R\times \R^N$ to $v_\infty$, which solves 
    $$
    \partial_t v_\infty - d\Delta v_\infty + \lambda v_\infty =\mu p, \text{ for all } t \in \mathbb{R}, x\in \R^N.
    $$
    Moreover, up to another extraction, denoting $y$ a limit point of the bounded sequence $(y_n)_{n\in\N}$, we have
$$
 \vert v_\infty(0,y) - Z(p)(y)\vert  = \limsup_{t_n\to+\infty}\sup_{\substack{x = re \\ r\in [0, (\ul w(e)-\e)t_n]} } \vert v - Z(p)\vert.
$$
Let us now prove that $v_\infty \equiv Z(p)$, which would give the desired result.

\paragraph{Step 2. Proof of \eqref{lem int}.} Clearly $v_\infty$ is bounded uniformly in $(t,x)$ and is non-negative (because all the $v_n$ are). Let $T>0$ be fixed and define $\ul v^T(t,x)$ as the solution of 
\begin{equation}\label{vt}
    \partial_t \ul v^T - d\Delta \ul v^T + \lambda \ul v^T =\mu p, \text{ for all } t >-T, x\in \R,
\end{equation}
    with initial datum $\ul v^T(-T,\cdot)\equiv 0$. By comparison, we have
    $$
    \ul v^T (t,\cdot)\leq v_\infty(t,\cdot),\quad t>-T.
    $$
    The function $\ul v^T$ converges to $Z(p)$ as $t$ goes to $+\infty$. Indeed, the function everywhere constant equal to zero is subsolution of the stationary equation associated to \eqref{vt}, and it is classical (c.f. the lecture notes \cite{RR}) that $\ul v^T$ is then time increasing and converges (pointwise, and then $C^2$ uniformly owing to regularity theory) toward a stationary solution of $\eqref{vt}$. The unique stationary solution of \eqref{vt} is precisely $Z(p)$.
    
    Now, observing that, for $(t,x)$ fixed, $\ul v^T(t,x)$ also converges toward the same limit as $T$ goes to $+\infty$ (this comes from the fact that \eqref{vt} is autonomous), we find that
    $$
    Z(p) \leq v_\infty.
    $$
    The proof of the reverse inequality is similar. We define $\ol v^T$ to be solution of the same equation but with initial datum $\ol v^T(-T,x) = M $, where $M>0$ is a positive constant, large enough to be supersolution of the stationary equation (and to be above $v_\infty$). Using the same argument we end up finding $Z(p)\geq v_\infty$. Therefore, $ \limsup_{t\to+\infty}\sup_{\substack{x = re \\ r\in [0, (\ul w(e)-\e)t]} } \vert v - Z(p)\vert=0$.

    \paragraph{Step 3. Proof of \eqref{lem ext}.} The argument is similar: we take a fixed $\e>0$ and two sequences $(t_n)_{n\in \N},(x_n)_{n\in \N}$ such that $t_n \to+\infty$, $x_n \in \{x = re,  \ r \geq (\ol w(e)+\e)t_n, \ e\in \S\}$. We define the translations $u_n,v_n$ as above, we have that $v_n\to v_\infty$ solution of
$$
    \partial_t v_\infty - d\Delta v_\infty + \lambda v_\infty =0, \text{ for all } t \in \mathbb{R}, x\in \R^N.
    $$
    Following the same lines we end up finding that $v_\infty=0$ and this proves the result.
\end{proof}

In the next lemma, we use the speeds $w^\star, w_\star$, defined in \eqref{def w haut}, \eqref{def w bas}.

\begin{lemma}\label{lem KPP}
Let $p\in C^\delta_{per}$ be strictly positive and such that $p\leq \ol R$. Let $v\in \mc S(p, w_\star, w^\star)$ and $u$ be solution to
    \begin{equation}\label{eq u}
    \partial_t u - d\Delta u = (\gamma - \alpha v) u -\alpha u^2, \quad t>0,\ x\in \R,
    \end{equation}
    with initial datum $u_0$ continuous, compactly supported, non-negative and non-zero. Then,
    $$
    u \in \mc S\left(T\circ A(p),  w_\star, w^\star \right).
    $$
\end{lemma}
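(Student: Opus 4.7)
\textbf{The plan} is to follow the translation-compactness scheme of Lemma~\ref{lem lin}, augmented by a sub-solution construction to handle the interior spreading. Since $v\geq 0$ gives $(\gamma-\alpha v)\leq\gamma\in L^\infty$, $u$ is a sub-solution of $\partial_t w=d\Delta w+\|\gamma\|_\infty w-\alpha w^2$, hence $\|u\|_\infty<\infty$, and Proposition~\ref{prop reg} yields $u\in C^\delta_{loc.unif}$. For the exterior estimate at speed $w^\star$: for every $\e>0$ and $t$ large, $\gamma(t,x)\leq\gamma^\star(x)+\e$, so combined with $v\geq 0$ the function $u$ is a sub-solution of the periodic KPP equation with coefficient $\gamma^\star+\e$; Theorem~\ref{th FG} gives a spreading speed $w^\star_\e$ for this equation, and continuity of~\eqref{eq FG} in the coefficient yields $w^\star_\e\to w^\star$ as $\e\to 0$, whence $\sup_{\{x=re,\,r\geq(w^\star(e)+\e)t,\,e\in\S\}}|u|\cv 0$.

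For the interior estimate at speed $w_\star$, I would pick $(t_n,x_n)$ realizing the supremum on $\{x=re:r\in[0,(w_\star(e)-\e)t_n],\,e\in\S\}$, decompose $x_n=k_n+y_n$ with $k_n\in\Z^N$ and $y_n\in[0,1)^N$, and translate $u_n(t,x):=u(t+t_n,x+k_n)$ together with $v_n,\gamma_n$ similarly. For every fixed compact $(t,x)$, $(t+t_n,x+k_n)$ eventually lies in the speed-$w_\star$ inner region of $v$; together with the periodicity of $p$ and $\gamma^\star$ this gives $v_n\to p$ and $\gamma_n\to\gamma^\star$ locally uniformly. Proposition~\ref{prop reg} and Arzela-Ascoli extract a subsequential limit $u_\infty\geq 0$ solving the autonomous periodic KPP equation
$$\partial_t u_\infty=d\Delta u_\infty+(\gamma^\star-\alpha p)u_\infty-\alpha u_\infty^2\quad\text{on }\R\times\R^N.\qquad(\star)$$
Since $p\leq\ol R$, assumption~\eqref{assumption2} and Rayleigh-Ritz monotonicity give $\lambda_1(-d\Delta-(\gamma^\star-\alpha p))<0$, so Theorem~\ref{th FG} applies to~$(\star)$ with unique positive periodic stationary solution $T\circ A(p)$. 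Once $u_\infty>0$ is known, the sandwich argument from Lemma~\ref{lem lin}, Steps~2--3 --- comparing $u_\infty$ on $[-T,+\infty)$ with solutions of~$(\star)$ starting at time $-T$ from a small bump $0<\phi\leq u_\infty(-T,\cdot)$ and from a constant $M\geq\|u_\infty\|_\infty$ respectively, then letting $T\to+\infty$ --- forces $u_\infty\equiv T\circ A(p)$, giving the desired interior convergence.

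\textbf{The main obstacle} is securing $u_\infty>0$, equivalently a uniform positive lower bound for $u(t_n,x_n)$ along the extraction. The natural plan is to compare $u$ from below with a solution of the $(\gamma^\star-\alpha\ol R-C\e)$-KPP equation, whose spreading speed is $w_\star-O(\e)$ by continuity of~\eqref{eq FG}. The subtlety is that the hypothesis $v\in\mc S(p,w_\star,w^\star)$ only yields $v\leq p+o(1)\leq\ol R+o(1)$ in the inner region of $v$ and $v\to 0$ in the outer region, while on the intermediate annulus $\{(w_\star-\e)t\leq|x|\leq(w^\star+\e)t\}$ no such bound on $v$ is available. I would handle this by constructing a compactly supported sub-solution $\underline u$ of the $u$-equation whose support is confined to $\{|x|\leq(w_\star(e)-\e/2)t\}$, in which region $v\leq\ol R+C\e$ holds asymptotically; inside its support $\underline u$ then satisfies the dominating inequality $\partial_t\underline u\leq d\Delta\underline u+(\gamma^\star-\alpha\ol R-C'\e)\underline u-\alpha\underline u^2$, and Theorem~\ref{th FG} applied to this comparison equation (whose principal eigenvalue is strictly negative by~\eqref{assumption2} and continuity) yields $\underline u(t_n,x_n)\geq\ul I(x_n)-o(1)>0$. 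Passing this bound to the limit along the translations gives $u_\infty(0,y)>0$, and the strong maximum principle upgrades it to $u_\infty>0$ on $\R\times\R^N$, closing the argument.
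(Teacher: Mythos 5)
Your overall strategy mirrors the paper's: compare $u$ from above with the KPP equation with coefficient $\gamma^\star+\eta$ for the outer estimate; use translation-compactness and a sub/supersolution sandwich for the inner estimate; and obtain the crucial positive lower bound on the entire limit $u_\infty$ via the compactly supported subsolutions from the Berestycki--Hamel--Nadin proof of Theorem~\ref{th FG}. This is exactly the route the paper follows.

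One detail as written does not quite close. In your Step~3 you perturb the reaction coefficient by a quantity proportional to $\e$, writing $v\le\ol R+C\e$ on the inner cone and comparing with $\gamma^\star-\alpha\ol R-C'\e$. The spreading speed of that comparison equation is $w_\star-\kappa(\e)$ with $\kappa(\e)=O(\e)$, and the implicit constant depends on the data and may well exceed $1$. Since the test points $x_n$ sit at radii up to $(w_\star(e)-\e)t_n$, if $\kappa(\e)>\e$ the BHN subsolution never reaches them and no lower bound on $u(t_n,x_n)$ follows. The correct bookkeeping (which the paper carries out) keeps the coefficient perturbation $\eta$ and the margin $\delta$ defining the inner cone as parameters chosen in $(0,\e/2)$ \emph{after} $\e$ is fixed, and sends $\eta\to0$ with $\e$ fixed so the comparison speed rises above $w_\star-\e/2>w_\star-\e$. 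The paper also compares against $\gamma^\star-\alpha p-\eta$ rather than $\gamma^\star-\alpha\ol R-\eta$; because $p\le\ol R$, the limiting speed as $\eta\to0$ is then $\ge w_\star$, which is where the needed slack comes from. A smaller remark: to run the sandwich-from-below with a bump $\phi\le u_\infty(-T,\cdot)$ uniformly in $T$, you need the uniform bound $u_\infty\ge\rho>0$ that the BHN construction actually gives on the whole inner region, not just pointwise positivity upgraded by the strong maximum principle. With these adjustments your proof coincides with the paper's.
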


\begin{proof} We split the proof into three steps.

\paragraph{Step 1. $u$ spreads at most with speed $w^\star$.}

Observe that, since $\gamma$ converges to $\gamma^\star$ uniformly when $t \to +\infty$, for all $\eta>0$ there exists $T>0$ such that
\begin{equation}
    \label{eq encadrement gamma}
   \gamma(t,\cdot) \leq \gamma^\star+ \eta=: \gamma^{\star}_{\eta},\quad \forall t>T.
\end{equation}
Then, since $v\geq 0$, we have
$$
\partial_t u - d\Delta u \leq \gamma^{\star}_{\eta} u - \alpha u^2, \qquad \text{for all } t>T.
$$
Hence the solution $\tilde{u}$ of 
$$
 \partial_t \t u - d\Delta \t u =\gamma^{\star}_{\eta} \t u - \alpha \t u^2,\quad t>T
$$
with
$$
\t u(T,\cdot) = u(T,x).
$$
is a supersolution of \eqref{eq u} for $t>T$, which implies that $u \leq \t u $ for all $t>T$.

However, Theorem \ref{th FG}\footnote{Observe that $u(T,\cdot)$ is not compactly supported. This is not a problem: Theorem \ref{th FG} still holds when the initial data have a Gaussian decay (this is the case of $u(T,\cdot)$).} tells us that $\t u$ converges toward $\ol I_\eta = T\left(\frac{\gamma^*_\eta}{\alpha}\right)$ solution of 
$$
-d\Delta \ol I_\eta=  \gamma^{\star}_{\eta} \ol I_\eta - \alpha (\ol I_\eta)^2,
$$
with speed of propagation $w^{\star}_{\eta}$ given by
$$
w^{\star}_{\eta} (e) := \inf_{\lambda \in \mathbb{R}^N} \frac{-k(\lambda) + \eta}{\lambda\cdot e}
$$
where $k(\lambda)$ is the principal eigenvalue of the operator $L_\lambda$, \eqref{eq operator L_lambda}. 
Hence, for all $\e>0$
$$
\sup_{\substack{x = re \\ r\geq (w^{\star}_\eta(e)+\e)t}}\vert u \vert \underset{t\to+\infty}{\longrightarrow} 0.
$$
and, since this is true for all $\eta >0$, it still holds by replacing $w^{\star}_{\eta}$ by $w^{\star}$.\\

\paragraph{Step 2. $u$ converges to $T\circ A (p)$ with speed at least $ w_\star$.}

Let $\e>0$. As in the proof of the previous lemma, we take two sequences $(x_n)_{n\in \N}$ and $(t_n)_{n\in \N}$ such that $t_n\to+\infty$, $x_n \in \{x = re \ : \  r\leq (w_\star(e)-\e)t_n, \ e\in \S\}$ and
$$
\vert u(t_n,x_n)-T\circ A(p)\vert = \sup_{\substack{x = re \\ r\leq (w_\star(e)-\e)t_n}} \vert u(t_n,x) - T\circ A(p)\vert.
$$
As before, we also define $(k_n)_{n\in \N} \in (\Z^N)^\N$ and $(y_n)_{n\in \N}\in ([0,1)^N)^\N$ such that $x_n = k_n+y_n$ and the translations
$$
u_{n}(t,x) = u(t+t_n,x+k_n),\quad v_n(t,x) = v(t+t_n,x+k_n).
$$
Using the same arguments as in Lemma \ref{lem lin}, because $v\in \mc S(p, w_\star, w^\star)$, we have that $u_n$ converges to a solution $u_\infty$ of
\begin{equation}\label{eq u inf}
\partial_t u_\infty = d\Delta u_\infty + (\gamma^\star-\alpha p) u_\infty-\alpha u_\infty^2, \quad t\in \R, x\in \R^N,
\end{equation}
due to \eqref{def gamma}.
As before, letting $y$ be a limit point of the sequence $(y_n)_{n\in \N}$, we have
$$
\vert u_\infty(0,y)-T\circ A (p)(y)\vert =\limsup_{n\to+\infty} \sup_{\substack{x = re \\ r \leq (w_\star(e) - \e)t_n}}\vert u(t_n,x)-T\circ A(p) \vert.
$$
To conclude the proof, we now show that $u_\infty \equiv T\circ A(p)$. For simplicity, we denote $\t p =T\circ A(p)$.

Observe that the function $M \t p$ is supersolution of \eqref{eq u inf} if $M>1$. We take $M>1$ large enough so that $M \t p \geq u_\infty$ (which is possible because $u_\infty$ is bounded). Let $\t u$ be the solution of \eqref{eq u inf} with initial datum $\t u(-T,\cdot) = M \t p$, for some $T>0$. Since the initial datum is a stationary supersolution, $\t u$ is time non-increasing, hence converges toward a bounded stationary solution. Arguing as in the previous lemma, we find
$$
u_\infty \leq \t p = R\circ A(p).
$$
Let us now prove the reverse inequality. To do so, we can argue as above: observe that the function $M \t p$ is stationary subsolution of \eqref{eq u inf} when $M\in [0,1)$.

Now, for $T>0$, define $\t u$ to be the solution of \eqref{eq u inf} with initial datum $\t u(-T,\cdot) = M \t p$.

Assume for now that there is $\rho >0$ such that
\begin{equation}\label{u inf}
    u_\infty(t,x) \geq \rho \quad \forall t\in \R,\ x\in \R^N.
\end{equation}
Then, up taking $M$ small enough so that $M\t p \leq \rho$, we have that $\t u \leq u_\infty$ for all $t>-T$, and arguing as above we find that that $\t p \leq u_\infty$, finishing the proof. The key point is to have \eqref{u inf}.

\paragraph{Step 3. Proof of \eqref{u inf}.} The proof of \eqref{u inf} relies not directly on the Freidlin-Gartner Theorem \ref{th FG} but on a technical point of its proof from \cite{BHN}.

Indeed, recall that the function $u_\infty$ is the limit of $u(t+t_n,x+k_n)$. The function $u$ itself solves \eqref{eq u}.

Let $\delta,\eta \in (0,\frac{\e}{2})$. Since $\gamma(t,x)$ converges uniformly to $\gamma^\star$ as $t$ goes to $+\infty$ and $v \in \mc S(p,w_\star,w^\star)$, we have that, for $T>0$ large enough,
\begin{equation}\label{super sol tech}
\partial_t u \geq d\Delta u +(\gamma^\star - \alpha p - \eta)u - \alpha u^2,\quad \text{for}\ t>T,\ x\in \mc W_t^\delta,
\end{equation}
where we denote $\mc W_t^\delta :=\{x = re \ : \ r \leq (w_\star(e) - \delta)t,\ e\in \S\}$.

Now, let $w_\eta$ be the speed of propagation associated to the equation \eqref{super sol tech} given by the Freidlin-Gartner theorem (when the equation is set on the whole space $x\in \R^N$, that is). Up to taking $\eta$ small enough, we can guarantee that $w_\eta \geq w_\star -\frac{\e}{2}$.

It is then a consequence of \cite{BHN}\footnote{More precisely, this is a consequence of the proof of Theorem 1.13, section 4.1 in \cite{BHN}. The authors prove the existence of subsolutions of \eqref{super sol tech} that are compactly supported and move in each directions $e$ with speed as close as we want to $w_\eta$. This is actually the key point of the proof of the Freidlin-Gartner theorem in \cite{BHN}.} that there are $\rho,T>0$ such that
$$
u(t,x) \geq \rho,\quad \text{for}\ t>T, \ x\in \mc W_t^{\frac{\e}{2}}.
$$
Then, by definition of the sequences $t_n, k_n$ chosen in step $2$, for any $(t,x)\in \R\times \R^N$, there is $n_0$ such that $n\geq n_0$ implies that $x+k_n \in \mc W_{t+t_n}^{\frac{\e}{2}}$. This means that $u_n(t,x) \geq \rho$ for such $n$, and finally $u_\infty(t,x)\geq \rho$. Because this is true for all $(t,x)$, \eqref{u inf} is verified.

\end{proof}

\subsection{Conclusion}

The core of the proof of Theorem \ref{main th} is the following.

\begin{prop}\label{prop conc}
Assume that the hypotheses of Theorem \ref{main th}, first point, are verified, i.e. \eqref{assumption1} and $\lambda >\Lambda_0$ hold true. Let $(I,R)$ be the solution of \eqref{PDE2} with initial datum $(I_0,0)$ where $I_0$ is non-negative, non-zero and compactly supported and let $p \in C^\delta_{per}$ be a strictly positive function such that $p \in \mathcal{I}$ i.e.
$$
\ul I  \leq p \leq \ol I.
$$
Assume there exists a function $u_0\in \mc S(p, w_\star,w^\star)$ such that    
    $$
    I\leq u_0.
    $$
    Then, there are $u_1,u_2,u_3,u_4$ such that
    $$
    u_3 \leq R \leq u_1,
    $$
    $$
    u_2 \leq I \leq u_4
    $$
    with
    \begin{equation*}
    \left\{
    \begin{array}{ll}
     &u_1 \in \mc S(Z(p), w_\star,w^\star),\\
     &u_2 \in \mc S(T\circ A\circ Z(p),w_\star,w^\star), \\
     &u_3 \in \mc S(Z\circ T\circ A\circ Z(p),w_\star,w^\star),\\
     &u_4 \in \mc S( (T\circ A\circ Z)^2(p),w_\star,w^\star).
    \end{array}
    \right.
    \end{equation*}
\end{prop}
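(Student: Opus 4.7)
The plan is a four-step bootstrap: starting from the given upper bound $I\leq u_0$, I alternately apply the parabolic comparison principle to the $R$-equation and the $I$-equation of \eqref{PDE2}, invoking Lemma \ref{lem lin} at the linear (diffusive) steps and Lemma \ref{lem KPP} at the KPP-type steps, so as to track both the limit profile and the spreading speeds.

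\textbf{Step 1 (bound $u_1\geq R$).} Since $I\leq u_0$ and $\mu\geq0$, we have $\partial_t R-d\Delta R+\lambda R=\mu I\leq \mu u_0$. Define $u_1$ to be the solution of
$$\partial_t u_1 = d\Delta u_1 - \lambda u_1 + \mu u_0,\qquad u_1(0,\cdot)\equiv 0 = R(0,\cdot),$$
so that $R\leq u_1$ by the parabolic comparison principle. Since $u_0\in\mc S(p,w_\star,w^\star)$ with $p\in C^\delta_{per}$ strictly positive, Lemma \ref{lem lin} gives $u_1\in\mc S(Z(p),w_\star,w^\star)$.

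\textbf{Step 2 (bound $u_2\leq I$).} Using $R\leq u_1$, we get $\partial_t I\geq d\Delta I+(\gamma-\alpha u_1)I-\alpha I^2$. Let $u_2$ solve this equation with equality and with initial datum $u_2(0,\cdot)=I_0$; comparison yields $u_2\leq I$. To apply Lemma \ref{lem KPP}, I must verify that the profile $Z(p)$ is strictly positive and bounded above by $\ol R$: this follows from $p\in\mc I$ and the monotonicity of $Z$, since $0<Z(\ul I)\leq Z(p)\leq Z(\ol I)=\ol R$. Lemma \ref{lem KPP} then gives $u_2\in\mc S(T\circ A\circ Z(p),w_\star,w^\star)$.

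\textbf{Step 3 (bound $u_3\leq R$).} From $I\geq u_2$ the $R$-equation yields $\partial_t R\geq d\Delta R-\lambda R+\mu u_2$. Defining $u_3$ as the solution of
$$\partial_t u_3 = d\Delta u_3 - \lambda u_3 + \mu u_2,\qquad u_3(0,\cdot)\equiv 0,$$
comparison gives $u_3\leq R$. The profile $T\circ A\circ Z(p)$ is strictly positive and periodic because, by Lemma \ref{lem stable}, $T\circ A\circ Z(p)\in\mc I$, and $\ul I>0$ thanks to \eqref{assumption2} (which is guaranteed by $\lambda>\Lambda_0$ through Lemma \ref{lem Lambda}). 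Lemma \ref{lem lin} then gives $u_3\in\mc S(Z\circ T\circ A\circ Z(p),w_\star,w^\star)$.

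\textbf{Step 4 (bound $u_4\geq I$).} Finally $R\geq u_3$ yields $\partial_t I\leq d\Delta I+(\gamma-\alpha u_3)I-\alpha I^2$. Let $u_4$ solve this with equality and $u_4(0,\cdot)=I_0$; by comparison $I\leq u_4$. The profile of $u_3$ is $Z\circ T\circ A\circ Z(p)$, which is strictly positive (as $T\circ A\circ Z(p)\geq\ul I>0$) and bounded by $\ol R$ (since $T\circ A\circ Z(p)\leq\ol I$ by Lemma \ref{lem stable}, hence $Z\circ T\circ A\circ Z(p)\leq Z(\ol I)=\ol R$). Lemma \ref{lem KPP} therefore gives $u_4\in\mc S((T\circ A\circ Z)^2(p),w_\star,w^\star)$, concluding the proof.

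The main obstacle is the verification, at each application of Lemma \ref{lem KPP}, that the limit profile appearing in the coefficient (namely $Z(p)$ at Step 2 and $Z\circ T\circ A\circ Z(p)$ at Step 4) is strictly positive and pointwise bounded above by $\ol R$; this is exactly where the hypothesis $p\in\mc I$ and the stability of $\mc I$ under $T\circ A\circ Z$ (Lemma \ref{lem stable}), together with \eqref{assumption2}, play a decisive role. All the remaining work is a routine use of the parabolic maximum principle together with the two preliminary lemmas.
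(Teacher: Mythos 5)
Your proof matches the paper's argument essentially step for step: the same four-stage bootstrap alternating the parabolic comparison principle on the $R$- and $I$-equations, invoking Lemma~\ref{lem lin} at Steps~1 and~3 and Lemma~\ref{lem KPP} at Steps~2 and~4, with the same comparison functions. Your verification in Step~4 that $Z\circ T\circ A\circ Z(p)\leq\ol R$ goes through the invariance of $\mc I$ under $T\circ A\circ Z$ rather than through the monotonicity chain starting from $p\geq\ul I$ as in the paper, but this is a cosmetic difference; both are immediate consequences of Lemma~\ref{lem stable}.
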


\begin{proof}
   We have $I \leq u_0$ where
$$
u_0 \in \mc S(p, w_\star,w^\star).
$$
Therefore,
$$
\partial_t R \leq d\Delta R -\lambda R +\mu u_0,  \quad t>0, \ x\in \R^N.
$$
Hence, letting $u_1$ be the solution of 
$$
\partial_t u_1 = d\Delta u_1 -\lambda u_1 +\mu u_0,  \quad t>0, \ x\in \R^N,
$$ with initial datum $u_1(0,\cdot) \equiv 0$, we find thanks to the parabolic comparison principle that
$$
R \leq u_1,
$$
and, thanks to Lemma \ref{lem lin}, we have $u_1 \in \mc S(Z(p), w_\star,w^\star)$. This implies in turn that
$$
\partial_t I \geq d \Delta I +(\gamma -\alpha u_1)I - \alpha I^2,  \quad t>0, \ x\in \R^N.
$$
Therefore, owing to the parabolic comparison principle, we have $I \geq u_2$, where $u_2$ solves
$$
\partial_t u_2 = d\Delta u_2 +(\gamma - \alpha u_1)u_2 - \alpha u_2^2,  \quad t>0, \ x\in \R^N,
$$
with initial datum $u_2(0,\cdot) = I_0$.

Because $p\leq \ol I$, we have $Z(p) \leq \ol R$. We can then apply
Lemma \ref{lem KPP} to obtain $u_2 \in \mc S(T\circ A\circ Z(p), w_\star,w^\star)$.

Now, this implies that
$$
\partial_t R \geq d\Delta R - \lambda R +\mu u_2,  \quad t>0, \ x\in \R^N,
$$
and by the same arguments as above this implies that $R \geq u_3$, which solves $\partial_t u_3 = d\Delta u_3 - \lambda u_3 +\mu u_2$, and $u_3 \in \mc S\left(Z\circ T\circ A\circ Z(p),w_\star,w^\star\right)$.\\

This in turns implies that $\partial_t I \leq d\Delta I +(\gamma - \alpha u_3)I - \alpha I^2$ for $t>0, \ x\in \R^N$. Because $p\geq \ul I$, we have that $Z\circ T\circ A\circ Z(p) \leq Z\circ T\circ A\circ Z(\ul I) \leq Z(\ol I) = \ol R$, we can then apply Lemma \ref{lem KPP}. We find that $I \leq u_4$, where $u_4 \in \mc S\left(( T\circ A\circ Z)^2(p),w_\star,w^\star\right)$. This concludes the proof.
\end{proof}

We are now in position to obtain Theorem \ref{main th}. This is done by iterating Proposition \ref{prop conc}.

\begin{proof}[Proof of Theorem \ref{main th}]
    Let $I,R$ be the solution of \eqref{PDE2} arising from the initial datum $(S_0,I_0,0)$. First, observe that
    $$
    \partial_t I \leq d\Delta I +\gamma I -\alpha I^2,  \quad t>0, \ x\in \R^N.
    $$
    Let $u_0(t,x)$ the solution of
    $$
    \partial_t u_0 = d\Delta u_0 +\gamma u_0 - \alpha u_0^2,\quad t>0,\ x\in \R,
    $$
    with initial datum $u_0(0,\cdot) = I_0$. We have, thanks to Lemma \ref{lem KPP} applied with $v =0 \in \mc S(0,w_\star,w^\star)$ (and observing that $T\circ A(0)=T(\frac{\gamma^\star}{\alpha})=\ol I$), 
    $$
    u_0 \in\mc S(\ol I,w_\star,w^\star).
    $$
    Therefore, Proposition \ref{prop conc} implies that there are two functions $\ol u,\ul u$ such that
    $$
    \ul u \leq I \leq \ol u
    $$
    with
    \begin{equation*}
    \left\{
    \begin{array}{ll}
     &\ul u \in \mc S(T\circ A\circ Z(\ol I), w_\star,w^\star), \\
     &\ol u \in \mc S((T\circ A\circ Z)^2(\ol I),w_\star,w^\star).
    \end{array}
    \right.
    \end{equation*}
    Iterating the argument, for all $n \in \N$, there are functions $\ul u_n, \ol u_n$ such that
     $$
    \ul u_n \leq I \leq \ol u_n
    $$
    with
    \begin{equation*}
    \left\{
    \begin{array}{ll}
     &\ul u_n \in \mc S((T\circ A\circ Z(\ol I))^{2n+1},w_\star,w^\star), \\
     &\ol u_n \in \mc S((T\circ A\circ Z(\ol I))^{2n+2},w_\star,w^\star).
    \end{array}
    \right.
    \end{equation*}
Owing to Proposition \ref{prop contractivity} and Lemma \ref{lem Lambda}, the operator $T\circ A\circ Z$ is a contraction on $\mc I$. Therefore, $(T\circ A\circ Z(\ol I))^{2n+1}$ and $(T\circ A\circ Z(\ol I))^{2n+2}$ converge toward the fixed point $I^\star$ given by Theorem \ref{th existence} when $n$ goes to $+\infty$.\\

Observe that these convergences hold not only in $L^2$ but also in $L^\infty$ norm: indeed, the operator $T\circ A\circ Z$ is compact from $L^2$ into $C^\delta$ (this result was obtained in the proof of Theorem \ref{th existence} as an application of elliptic regularity theory). Therefore, for $\eta>0$, up to taking $n$ large enough, we have, for all $x\in \R^N$, 
$$
I^\star(x) - \eta\leq (T\circ A\circ Z(\ol I))^{2n+1}(x)\leq (T\circ A\circ Z(\ol I))^{2n+2}(x)\leq I^\star(x) +\eta.
$$
Now, for $\e>0$ fixed, because $\ul u_n \in \mc S((T\circ A\circ Z(\ol I))^{2n+1},w_\star,w^\star)$ and $\ol u_n \in \mc S((T\circ A\circ Z(\ol I))^{2n+2},w_\star,w^\star)$, we can find $T>0$ large enough so that, for $t>T$, $x\in \{x = re\ : \ r\in [0,(w_\star(e)-\e)t],\ e\in \S\}$, we have
$$
\ol u_n(t,x) \leq  (T\circ A\circ Z(\ol I))^{2n+2}(x)+\eta
$$
and 
$$
\ul u_n(t,x) \geq (T\circ A\circ Z(\ol I))^{2n+1}(x) - \eta.
$$
Combining all that precedes, we find that, for $\eta>0$, there is $T>0$ such that, for $t>T$,
$$
 \sup_{\substack{x = re \\ r\leq (w_\star(e)-\e)t_n}} \vert I(t,x) - I^\star(x)\vert \leq 2\eta.
$$
Therefore,
$$\sup_{\substack{x = re \\ r\leq (w_\star(e)-\e)t}} \vert I(t,x) - I^\star(x)\vert \underset{t\to+\infty}{\longrightarrow}0.
$$
Similarly, we prove that
$$\sup_{\substack{x = re \\ r\geq (w_\star(e)+\e)t}} \vert I(t,x)\vert \underset{t\to+\infty}{\longrightarrow}0,
$$
and the same holds true for $R$. This proves the result.
\end{proof}

To conclude the proof of Theorem \ref{main th}, it remains to consider the second point, that is, we show that when
$$
\lambda_1(-d \Delta - \gamma^\star) \geq 0,
$$
the disease does not spread.
\begin{prop}\label{prop non prop}
    Assume that \eqref{assumption1} is not verified. Then, 
    $$
    (S,I,R) \underset{t\to +\infty}{\longrightarrow} \left( \fint S_0,0,0 \right),
    $$
    and this convergence is uniform in $x \in \R^N$.
\end{prop}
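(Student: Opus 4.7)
\textbf{Proof plan for Proposition \ref{prop non prop}.} The plan is first to prove that $I(t,x)\to 0$ uniformly in $x$ as $t\to +\infty$; the convergences $R\to 0$ and $S\to \fint S_0$ will then follow by direct parabolic comparison. The strategy for $I$ is a contradiction-by-compactness argument: if $I$ failed to vanish uniformly, translations of $(I,R)$ would converge locally uniformly to a non-trivial bounded entire non-negative solution of the limit system with frozen coefficients $\gamma^\star,\alpha,\mu,\lambda$, and the hypothesis $\lambda_1 := \lambda_1(-d\Delta - \gamma^\star)\geq 0$ would force that limit to vanish, giving a contradiction.

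The key ingredient is the following rigidity statement: any bounded non-negative entire subsolution $u$ of the KPP equation $\partial_t u - d\Delta u - \gamma^\star u + \alpha u^2 \leq 0$ on $\R\times \R^N$ is identically zero when $\lambda_1\geq 0$. Letting $\phi\in C^\delta_{per}$ be the positive principal eigenfunction of $-d\Delta - \gamma^\star$, one builds for each $M>0$ a global supersolution $w$ of KPP with $w(0,\cdot) = M\phi$ which vanishes uniformly in $x$ as $t\to +\infty$: in the supercritical case $\lambda_1>0$ one takes $w(t,x) = M\phi(x)\, e^{-\lambda_1 t}$, while in the critical case $\lambda_1 = 0$ the algebraic form $w(t,x) = M\phi(x)/(1 + \alpha M \min\phi \cdot t)$ works, as a direct calculation using $-d\Delta \phi = \gamma^\star \phi$ shows, the quadratic term $\alpha u^2$ precisely compensating the missing linear decay. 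Given such $u$ with $\|u\|_\infty = V$, fix $(t_0,x_0)$ and, for each $s>0$, let $v^s$ solve the Cauchy problem for KPP with initial datum $u(t_0-s,\cdot)$. Parabolic comparison (subsolution vs.\ solution from the same datum) gives $u(t_0,x_0)\leq v^s(s,x_0)$, and choosing $M\geq V/\min\phi$ further gives $v^s(s,x_0)\leq w(s,x_0)\to 0$ as $s\to +\infty$, whence $u(t_0,x_0) = 0$.

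For the contradiction, suppose there exist $\delta>0$ and $(t_n,x_n)$ with $t_n\to +\infty$ and $I(t_n,x_n)\geq \delta$. Decomposing $x_n = k_n+y_n$ with $k_n\in\Z^N$, $y_n\in[0,1]^N$, extracting $y_n\to y$ along a subsequence, and setting $I_n(t,x) := I(t+t_n,x+k_n)$, $R_n(t,x) := R(t+t_n,x+k_n)$, the pair $(I_n,R_n)$ is uniformly bounded (since $I_n+R_n\leq N$ is bounded), and by periodicity of $\alpha,\mu,\lambda$ together with the uniform convergence $N(t,\cdot)\to \fint S_0$ from the heat equation, its equation has coefficients converging uniformly on compacts to those of the limit system with potential $\gamma^\star$. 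Proposition \ref{prop reg} and Arzel\`a-Ascoli then yield subsequential limits $(I_\infty,R_\infty)$, bounded and non-negative on all of $\R\times \R^N$, solving this limit system. Since $R_\infty\geq 0$, the function $I_\infty$ is a bounded non-negative entire subsolution of the KPP equation with potential $\gamma^\star$, hence $I_\infty\equiv 0$ by the previous paragraph; but $I_\infty(0,y) = \lim I_n(0,y_n) = \lim I(t_n,x_n)\geq \delta$, a contradiction. Therefore $I\to 0$ uniformly in $x$.

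Once $I\to 0$ uniformly, for any $\e>0$ and $T$ large enough so that $\max\mu \cdot \|I(t,\cdot)\|_\infty \leq \e \min\lambda$ for $t\geq T$, the function $\|R(T,\cdot)\|_\infty e^{-\min\lambda\,(t-T)} + \e$ is a spatially constant supersolution of the equation for $R$ on $[T,+\infty)\times\R^N$; letting $t\to +\infty$ and then $\e\to 0$ gives $R\to 0$ uniformly, after which $S = N-I-R\to \fint S_0$ uniformly follows. The main obstacle in this program is the rigidity statement in the critical case $\lambda_1 = 0$: the natural exponential supersolution $M\phi\, e^{-\lambda_1 t}$ degenerates to a time-independent function and gives no decay, so one really has to exploit the quadratic damping $-\alpha u^2$ via the algebraic supersolution above to get $w(t,\cdot)\to 0$.
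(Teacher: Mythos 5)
Your proof is correct and follows essentially the same compactness-plus-Liouville strategy as the paper: reduce to showing $I\to 0$ uniformly, translate along a bad sequence, pass to an entire bounded non-negative limit, and invoke a rigidity result for the frozen KPP equation when $\lambda_1(-d\Delta-\gamma^\star)\geq 0$, then deduce $R\to 0$ by a supersolution and $S=N-I-R\to\fint S_0$ from the heat equation. Two small deviations are worth noting. (i) The paper first dominates $I$ by the \emph{solution} $u$ of $\partial_t u=d\Delta u+\gamma(t,x) u-\alpha u^2$ with datum $I_0$ and translates $u$, so the entire limit $u_\infty$ is a genuine solution of the frozen equation; you instead translate the pair $(I,R)$ directly, so $I_\infty$ is only a \emph{subsolution}, and your rigidity statement is accordingly phrased for subsolutions. (ii) The paper obtains the Liouville step by appealing to the sandwich argument of Lemma~\ref{lem lin} together with Theorem~\ref{th FG}, whereas you construct explicit globally decaying supersolutions from the principal eigenfunction $\phi$, including the algebraic barrier $w=M\phi/(1+at)$ in the critical case $\lambda_1=0$ where the exponential barrier degenerates --- this is a nice explicit way to close a gap the paper leaves implicit, and it directly yields the subsolution version you need. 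One minor slip: the constant in the critical supersolution should be $a\leq M\min\{\alpha\phi\}$; writing it as $\alpha M\min\phi$ is ambiguous when $\alpha$ is non-constant, though the computation goes through once this is fixed.
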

\begin{proof}
First, we know that $(I,R)$ solves \eqref{PDE2}, hence
    $$
    \partial_t I \leq d\Delta I + \gamma(t,x) I - \alpha I^2,  \quad t>0, \ x\in \R^N.
    $$
Define $u$ to be the solution of
$$
    \partial_t u = d\Delta u + \gamma(t,x) u - \alpha u^2,  \quad t>0, \ x\in \R^N,
    $$
    with initial datum $u(0,\cdot) = I_0$. Hence, $I\leq u$.
    
    Now, let us show that $u(t,x)$ goes to zero uniformly with respect to $x$ as $t$ goes to infinity. For any sequence $(t_n)_{n\in\N} \in \R_+^\N$ and any sequence $(x_n)_{n\in\N} \in (\R^N)^\N$, we denote $u_n(t,x)=u(t+t_n,x+k_n)$, where $(k_n)_{n\in\N} \in (\Z^N)^{\N}$ is such that $x_n = k_n + z_n$ with $z_n \in [0,1]^N$. We obtain $$
    \partial_t u_n = d\Delta u_n + \gamma(t+t_n,x+k_n) u_n - \alpha u_n^2, 
     \text{ for } t>-t_n.$$ 
    
    Observe also that, since $\alpha,\mu$ are periodic and $N(t,x)$ converges toward $\fint S_0$ uniformly, we have that
    $$
    \gamma(t+t_n,x+k_n)=\alpha(x)N(t+t_n,x+k_n)-\mu(x)
    $$
    converges uniformly toward $\gamma^\star(x)$. Hence, the parabolic regularity theory implies that, up to extraction, $u_n(t,x)$ converges toward $u_\infty(t,x)$ as $n$ goes to $+\infty$, where $u_\infty$ solves
    $$
    \partial_t u_\infty = d\Delta u_\infty + \gamma^\star(x) u_\infty - \alpha u_\infty^2, \quad t \in \R, x\in \R^N.
    $$
   However, the only non-negative bounded solution of this equation is $0$, because $\lambda_1(-d\Delta - \gamma^\star) \geq 0$ (this follows by the same argument used in the proof of Lemma \ref{lem lin} and the Freidlin-Gartner Theorem \ref{th FG}). Hence, $u(t_n,x_n) \underset{n\to+\infty}{\longrightarrow} 0$, and therefore, $I(t,x)$ converges to zero uniformly in $x$ as $t$ goes to $+\infty$.\\

 Now, since $R$ satisfies 
 $$\partial_t R = d \Delta R -\lambda R + \mu I$$ 
 and $I$ goes uniformly to zero, we also have that $R$ goes to zero uniformly as $t$ goes to $+\infty$. This in turn implies that $S(t,x) = N(t,x) - I(t,x) - R(t,x)$ indeed converges to $\fint S_0$ uniformly, hence the result.
\end{proof}

We now consider the homogeneous case, and we prove Corollary \ref{cor hom}.
\begin{proof}[Proof of Corollary \ref{cor hom}]
The proof mainly consists in computing effectively the quantities that appear in Theorem \ref{main th}. Clearly, since $S_0 \in \mathbb{R}$, $\gamma^\star = \alpha S_0 - \mu \in \R$ and the principal eigenvalue of the operator
$$
\phi \mapsto -d\Delta \phi - (\alpha S_0 - \mu)\phi
$$
is simply $- (\alpha S_0 - \mu)$ (associated with the principal eigenfunction constant equal to $1$ for instance).

The second point of Corollary \ref{cor hom} directly comes from Theorem \ref{main th}.\\

For the first point, observe that the constant functions
    $$
(S^\star,I^\star,R^\star) = \left(\frac{\mu}{\alpha} , \frac{\alpha S_0 - \mu}{\alpha(1+\frac{\mu}{\lambda})},\frac{\mu}{\lambda}\frac{\alpha S_0 - \mu}{\alpha(1+\frac{\mu}{\lambda})} \right)
    $$
are indeed stationary solutions of \eqref{PDE1} and satisfy $S^\star + I^\star + R^\star = S_0$.

It is easy to check that 
$$\ol I = S_0 - \frac{\mu}{\alpha}, \quad \ol R = \frac{\mu}{\lambda}\ol I, \quad  w^\star = 2\sqrt{d(\alpha S_0 - \mu)} \text{ and }  w_\star = 2\sqrt{d(\alpha S_0 - \mu)\left(1 - \frac{\mu}{\lambda}\right)}.$$
In particular, we have
$$
\lambda_1\left(-d\Delta - (\gamma^\star - \alpha \ol R) \right)=-\gamma^\star\left(1 - \frac{\mu}{\lambda}\right).
$$
The only thing one has to check is that assuming $\lambda > \mu$ is sufficient. Clearly, in the homogeneous case, the quantity $\Lambda_0$ from \eqref{est lambda} does not boil down to $\mu$ (but to $2\mu$, which is larger). However, coming back to the proof of Proposition \ref{prop contractivity}, one can observe that the first step (the Lipschitz estimate on $T$) is much more simpler than the one we obtained: the operator $T$ is indeed the identity (because we are in the case where $\gamma^\star =\alpha S_0 - \mu>0)$. Hence, we should have
$$
C_{TAZ} = \frac{\mu}{\lambda}.
$$
Therefore, as soon as $\lambda > \mu$, then \eqref{assumption2} is verified and $C_{TAZ}<1$, which means that Lemma \ref{lem Lambda} holds true, and the rest of the proof is unchanged.
\end{proof}

\paragraph{Acknowledgements:} This study contributes to the IdEx Université de Paris ANR-18-IDEX-0001. M.L. would like to acknowledge the support from the project GOTA ANR-23-CE46-0001-01 (2023-2027). The research leading to these results has received funding from the ANR project “ReaCh” (ANR-23-CE40-0023-01).

\bibliographystyle{plain} 
\bibliography{biblio}

\end{document}